\let\reftagform@=\tagform@
\def\tagform@#1{\maketag@@@{(\ignorespaces\textcolor{PineGreen}{#1}\unskip\@@italiccorr)}}
\renewcommand{\eqref}[1]{\textup{\reftagform@{\ref{#1}}}}
\declaretheorem[
name=Theorem,
Refname={Theorem,Theorems},
numberwithin=section]{thm}
\declaretheorem[
name=Proposition,
Refname={Proposition,Propositions},
sibling=thm]{proposition}
\declaretheorem[
name=Claim,
Refname={Claim,Claims},
sibling=thm]{claim}
\declaretheorem[
name=Lemma,
Refname={Lemma,Lemmas},
sibling=thm]{lem}
\declaretheorem[
name=Definition,
Refname={Definition,Definitions},
sibling=thm]{dfn}
\declaretheorem[
name=Remark,
Refname={Remark,Remarks},
sibling=thm]{rem}
\newtheorem{thmx}{Theorem}
\renewcommand{\thethmx}{\Alph{thmx}}
\theoremstyle{plain}
\newlist{thmlist}{enumerate}{1}
\setlist[thmlist]{wide = 0pt, labelwidth = 2em, labelsep*=0em, itemindent = 0pt, leftmargin = \dimexpr\labelwidth + \labelsep\relax, noitemsep,topsep = 1ex, font=\normalfont, label=(\roman*), ref=\thethm.(\roman{thmlisti})}
\newlist{thmenum}{enumerate}{1} 
\setlist[thmenum]{wide = 0pt, labelwidth = 2em, labelsep*=0em, itemindent = 0pt, leftmargin = \dimexpr\labelwidth + \labelsep\relax, noitemsep,topsep = 1ex, font=\normalfont, label=(\roman*), ref=\thethmx.(\roman{thmenumi})}
\crefname{lem}{Lemma}{Lemmas}
\crefname{thm}{Theorem}{Theorems}
\crefname{proposition}{Proposition}{Propositions}
\crefname{dfn}{Definition}{Definitions}
\crefname{rem}{Remark}{Remarks}
\crefname{cor}{Corollary}{Corollaries}
\crefname{corx}{Corollary}{Corollaries}
\crefname{problem}{Problem}{Problems}
\crefname{thmx}{Theorem}{Theorems}
\crefname{claim}{Claim}{Claims}
\crefname{main}{Main Theorem}{Main Theorems}
\def\qb{\mathbb{Q}}
\newcommand{\cT}{\mathcal{T}}
\newcommand*{\rom}[1]{\expandafter\@slowromancap\romannumeral #1@}
\newcommand{\lowerromannumeral}[1]{\romannumeral#1\relax}
\newcommand{\crefnames}[3]{%
	\@for\next:=#1\do{%
		\expandafter\crefname\expandafter{\next}{#2}{#3}%
	}%
}
\newsavebox{\@brx}
\newcommand{\llangle}[1][]{\savebox{\@brx}{\(\m@th{#1\langle}\)}%
  \mathopen{\copy\@brx\kern-0.5\wd\@brx\usebox{\@brx}}}
\newcommand{\rrangle}[1][]{\savebox{\@brx}{\(\m@th{#1\rangle}\)}%
  \mathclose{\copy\@brx\kern-0.5\wd\@brx\usebox{\@brx}}}
\let\oldsection\section
\renewcommand{\section}{
	\renewcommand{\theequation}{\thesection.\arabic{equation}}
	\oldsection}
\let\oldsubsection\subsection
\renewcommand{\subsection}{
	\renewcommand{\theequation}{\thesubsection.\arabic{equation}}
	\oldsubsection}
\definecolor{plum}{rgb}{0.8,0.2,0.8}
\def\oc{\mathscr{O}}  
 \def\fc{\mathcal{F}}
\def\vc{\mathcal{V}}
\def\dl{\lceil D\rceil}
\def\dlt{\lceil \tilde{D}\rceil}
\def\cb{\mathbb{C}}
\def\Im{\operatorname{Im}}
\def\as{{a^\star}} \def\es{e^\star}
\def\cb{\mathbb{C}}
\def\zbb{\mathbb{Z}}
\def\as{\mathscr{A}}
\def\bs{\mathscr{B}}
\def\gs{\mathscr{G}}
\def\fs{\mathscr{F}}
\def\es{\mathscr{E}}
\def\ls{\mathscr{L}}
\def\ds{\mathscr{D}}
\def\xs{\mathscr{X}}
\def\grf{\mathfrak{gr}}
\let\@wraptoccontribs\wraptoccontribs
\begin{document}

\title[Hyperbolicity of bases of log CY families]{Hyperbolicity of bases of log Calabi-Yau families} 

\begin{abstract}
	In this paper, we prove that the quasi-projective base of any maximally variational smooth family of Calabi-Yau klt  pairs is both of log general type, and pseudo Kobayashi hyperbolic.  Moreover, such a base is Brody hyperbolic  if the family is effectively parametrized.
\end{abstract}

\author{Ya Deng} 
\thanks{The author is supported by  the Knut and Alice Wallenberg Foundation postdoctoral scholarship.}
\address{
	Department of Mathematical Sciences, Chalmers University of Technology \&  University of Gothenburg, Sweden}
\email{\ yade@chalmers.se, dengya.math@gmail.com}

\urladdr{\ https://sites.google.com/site/dengyamath}
	\date{\today}  
\subjclass[2010]{32Q45, 14C30, 32J25, 14H15}
\keywords{pseudo Kobayashi hyperbolicity, Viehweg hyperbolicity, Brody hyperbolicity, log Calabi-Yau family,  Viehweg-Zuo Higgs bundles, Campana-P\u{a}un  criterion, Finsler metric}
\maketitle

\section{Introduction} \label{sec:intro}
The goal of this paper is to prove the hyperbolicity of bases of maximally variational smooth families of log Calabi-Yau pairs.
\begin{thmx}\label{main}
	Let $f^\circ:(X^\circ,D^\circ)\to V$ be a  smooth family of  log     pairs (cf. \cref{def:smooth}) over a quasi-projective manifold $V$. Assume that  each fiber $(X_y,D_y)$ of $f^\circ$ is   Kawamata log terminal (klt for short) and $K_{X_y}+D_y\equiv_{\mathbb{Q}}0$, and the family is of \emph{maximal variation}, \emph{i.e.} the logarithmic Kodaira-Spencer map $T_V\to R^1f^\circ_*\big(T_{X^\circ/V}(-\log D^\circ)\big)$ is generically injective. Then
	\begin{thmenum}
		\item \label{Viehweg}$V$ is of log general type.
		\item \label{pseudo} $V$ is pseudo Kobayashi hyperbolic, \emph{i.e.} there exists a proper subvariety $Z\subsetneq V$ so that  Kobayashi pseudo distance $d_{V}(p,q)>0$ for any two distinct points $(p,q)$ not both contained in $Z$. In particular, any non-constant holomorphic map $\gamma:\cb\to V$ has image $\gamma(\cb)\subset Z$.
		\end{thmenum}
	\end{thmx}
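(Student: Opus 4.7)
The plan is to reduce both conclusions to two algebraic/differential positivity criteria applied to a \emph{Viehweg--Zuo type} logarithmic Higgs bundle built from the variation of Hodge structure of the family. For (i), I will invoke the Campana--P\u{a}un positivity theorem for log cotangent sheaves; for (ii), I will construct a Finsler pseudo-metric of negative holomorphic sectional curvature on $T_V$ and apply the Ahlfors--Schwarz lemma. Concretely, fix a log smooth compactification $V\hookrightarrow Y$ with simple normal crossing boundary $S=Y\setminus V$, extend $f^\circ$ (after birational modifications) to a projective morphism $f:(X,D+E)\to Y$ where $E=f^{-1}(S)_{\mathrm{red}}$.

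Step~1: construction of the Higgs bundle. Because $K_{X_y}+D_y\equiv_{\mathbb{Q}}0$, for a sufficiently divisible $m$ the relative pluri-log-canonical sheaf is $m$-torsion along fibres; after a base change and a cyclic cover $Z\to X$ adapted to a section of $m(K_{X/Y}+D)$, Deligne's canonical extension of the polarised VHS on the middle primitive cohomology of the new family produces a logarithmic Higgs bundle $(E_\bullet,\theta)=\bigl(\bigoplus_{p+q=n}E^{p,q},\theta\bigr)$ on $(Y,S)$, together with a \emph{big} line subbundle $\mathcal{A}\hookrightarrow E^{n,0}\otimes\oc_Y(\text{boundary correction})$ coming from the fibrewise pluri-log-canonical section, such that the iterated Higgs field factors as
\[
\tau^{(k)}\colon \mathcal{A}\otimes \Sym^{k}T_Y(-\log S)\longrightarrow E^{n-k,k},\qquad 1\le k\le n.
\]
The maximal variation hypothesis guarantees that $\tau^{(1)}$ is generically injective on $T_Y(-\log S)$, hence $\tau^{(k)}\neq 0$ on a nonempty Zariski open subset $V^\star\subset V$ for all $k\le\dim V$.

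Step~2: proof of (i). Dualising the largest non-vanishing $\tau^{(k)}$ and quotienting by its kernel, one obtains an injection of $\mathcal{A}$ into $\Sym^{k}\Omega^1_Y(\log S)\otimes\mathcal{Q}$ for a suitable quotient bundle $\mathcal{Q}$ of $E^{n-k,k}$; since $\mathcal{Q}$ carries a Hodge metric with semi-negative curvature in the sense of Griffiths, it is weakly negative, so the big line bundle $\mathcal{A}$ saturates to a big rank-one coherent subsheaf of $\Sym^{k}\Omega^1_Y(\log S)$. The Campana--P\u{a}un pseudo-effectivity criterion for quotients of the log cotangent bundle then forces $K_Y+S$ to be big, proving that $V$ is of log general type.

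Step~3: proof of (ii). Following the Schumacher / To--Yeung / Berndtsson--P\u{a}un--Wang / Deng scheme, define a Finsler pseudo-metric on $T_Y(-\log S)$ by the weighted sum
\[
F(v)^{2}=\sum_{k=1}^{\dim V} \alpha_k \, \|\tau^{(k)}(v^{\otimes k})\|_{h_{k}}^{2/k},
\]
with positive weights $\alpha_k$ to be tuned and $h_k$ the Hodge metric on $E^{n-k,k}$ twisted by a smooth metric on $\mathcal{A}$ with strictly positive curvature (which exists since $\mathcal{A}$ is big). A standard curvature computation, exploiting Griffiths' curvature formula and Schumacher's trick of choosing the weights $\alpha_k$ so that higher-order error terms are absorbed, shows that on the Zariski open $V^\star\subset V$ where $F$ is positive definite, the holomorphic sectional curvature of $F$ is bounded above by a negative constant $-c$. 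The Ahlfors--Schwarz lemma then gives $d_V(p,q)\geq c'\cdot d_F(p,q)$ on $V^\star$, so the Kobayashi pseudo-distance is nondegenerate away from the proper subvariety $Z=V\setminus V^\star$.

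The main obstacle is Step~1: in the log Calabi--Yau regime the fibres carry no canonical polarisation, so the original Viehweg--Zuo construction for families of canonically polarised varieties does not apply verbatim. One must work with a well chosen cyclic cover of $X$ associated to a pluri-log-canonical torsion section of $K_{X/Y}+D$, keep track of the klt boundary contributions when pushing forward to $Y$, and, crucially, upgrade the resulting subsheaf $\mathcal{A}$ from merely pseudo-effective to \emph{big}; this bigness is the technical heart of the argument and is where the maximal variation hypothesis on the log Kodaira--Spencer map must be used together with a Bogomolov--Sommese type vanishing for the Higgs sub-bundle generated by $\mathcal{A}$.
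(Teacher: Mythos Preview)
Your overall architecture matches the paper's almost exactly: build a Viehweg--Zuo logarithmic Higgs bundle on a compactification $(Y,B)$, then deduce (i) via Campana--P\u{a}un and (ii) via a negatively curved Finsler metric plus Ahlfors--Schwarz. That part is fine.

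The genuine gap is in Step~1, precisely where you yourself flag the difficulty. You write that the bigness of the line bundle $\mathcal{A}$ ``is where the maximal variation hypothesis \dots\ must be used together with a Bogomolov--Sommese type vanishing.'' That is not how bigness is obtained, and no such argument is known in the log Calabi--Yau setting. In the paper the bigness comes from the theorem of Cao--Guenancia--P\u{a}un (Viehweg's $Q_{n,m}$ conjecture for smooth families of klt pairs with $K+D\equiv_{\mathbb{Q}}0$): under maximal variation, $f_*(mK_{X/Y}+mD)^{\star\star}$ is a \emph{big} line bundle. This is a hard analytic input (variation of singular K\"ahler--Einstein metrics), and without it you have no way to produce a section of $mK_{X/Y}+mD-mf^*\mathcal{A}$ with $\mathcal{A}$ ample; your cyclic cover has nothing to eat. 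Once CGP is in hand, one still needs $\mathbb{Q}$-mild (stable) reduction to compactify well, and a fiber-product trick to amplify positivity before taking the cyclic cover---none of which is visible in your sketch.

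Two smaller inaccuracies worth fixing. First, maximal variation does \emph{not} force $\tau^{(k)}\neq 0$ for every $k\le\dim V$; what one actually shows (using Zuo's weak positivity of the dual of the kernel of the Higgs field, not a curvature sign on a quotient as you wrote) is that the minimal $k$ with $\tau^{(k+1)}=0$ satisfies $k\ge 1$, and that single $k$ yields the big subsheaf of $\Sym^k\Omega_Y(\log B)$. Second, in Step~3 the Finsler metric is only \emph{generically} positive-definite, and establishing this requires a separate ``infinitesimal Torelli''-type statement for the VZ Higgs bundle; it does not follow formally from $\tau^{(1)}$ being nonzero.
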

\cref{Viehweg} is often referred to the \emph{Viehweg hyperbolicity} in the literatures. Although \cref{Viehweg} and \cref{pseudo} are conjecturally to be equivalent by the tantalizing Lang's conjecture \cite[Chapter \rom{8}. Conjecture 1.4]{Lan91}, we cannot conclude one from the other directly at the present time. \cref{main} can be seen as some sort of   \emph{Shafarevich hyperbolicity conjecture} for families of log Calabi-Yau pairs. As first formulated  by Viehweg and Kov\'acs,  Shafarevich's conjecture   for higher dimensional   fibers and parametrizing spaces  states that a family of canonically polarized manifolds of maximal variation has as its base a variety of log general type. Shafarevich hyperbolicity conjecture as well as its generalized formulations   drew a lot of attention for a long time, and much progress has been achieved  during the last two decades, 
cf.  \cite{Kov00,VZ01,VZ02,Kov02,VZ03,KK08a,KK08b,KK10,JK11,Sch12,Pat12,TY14,CP15,CP15b,CP16,PS17,BPW17,Sch17,PTW18,Den18,Den18b,TY18,Taj18,WW19}, to quote only a few. 

The proof of \cref{main} is reduced to the  construction of certain   negatively twisted Higgs bundles (which we call Viehweg-Zuo  Higgs bundle in \cref{def:VZ}) over the base $V$ (see \cref{thm:VZ} below),  following the general strategies in \cite{VZ01,VZ02,VZ03,PS17,PTW18}. Indeed, once the Viehweg-Zuo (VZ for short)  Higgs bundle is established, as is well-known to the experts, \cref{Viehweg} follows from  the  celebrated theorem of Campana-P\u{a}un \cite[Theorem 4.1]{CP15b} on the vast generalization of  generic semipositivity result of Miyaoka, 
and \cref{pseudo} can be deduced from the author's recent work \cite[Theorem 3.8]{Den18} and \cite[Theorem C]{Den18b} on the construction of  \emph{generically non-degenerate}  Finsler metrics over the base (up to a birational model)  with the holomorphic sectional curvatures   bounded above by a negative constant.




A complex manifold $V$ is said to be \emph{Kobayashi hyperbolic} if the Kobayashi pseudo distance $d_V$ is a metric, and in particular, $V$ is \emph{Brody hyperbolic}: there exists no non-constant holomorphic map $\cb\to V$. The following result is a direct consequence of \cref{main}.
\begin{thmx}\label{cor:Lang}
	For the log Calabi-Yau family $f^\circ:(X^\circ,D^\circ)\to V$ as in \cref{main}, if   $f^\circ$ is further assumed to be \emph{effectively parametrized}, \emph{i.e.} the logarithmic Kodaira-Spencer map 
$$
	T_{V,y}\hookrightarrow  H^1 \big(X_y,T_{X_y}(-\log D_y)\big)
	$$
	is injective for any $y\in V$,
	then
	\begin{thmenum}
		\item \label{all general} every irreducible subvariety of $V$ is of log general type.
		\item \label{Brody} $V$ is Brody hyperbolic.
	\end{thmenum}
	\end{thmx}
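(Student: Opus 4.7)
The plan is to deduce both statements from \cref{main} by applying it not to $V$ itself but to the pullback of $f^\circ$ along a log resolution of an arbitrary irreducible subvariety $W\subset V$. Effective parametrization is precisely what makes this legitimate, since it ensures that Kodaira--Spencer injectivity survives restriction to subvarieties. Concretely, fix a smooth projective compactification $\overline V\supset V$ with simple normal crossing boundary; given an irreducible $W\subset V$ with $\dim W\geq 1$, take its Zariski closure $\overline W\subset\overline V$ and an embedded log resolution $\mu:\tilde W\to\overline W$, and set $\tilde W^\circ:=\mu^{-1}(W)$, a smooth quasi-projective variety serving as a log resolution of $W$. The pulled-back family $\tilde f^\circ:(\tilde X^\circ,\tilde D^\circ)\to\tilde W^\circ$ is again a smooth family of klt log Calabi--Yau pairs, and its logarithmic Kodaira--Spencer map factors as
\[
T_{\tilde W^\circ}\xrightarrow{d\mu}\mu^{*}T_{V}\xrightarrow{\mu^{*}\kappa}\mu^{*}R^{1}f^\circ_{*}\bigl(T_{X^\circ/V}(-\log D^\circ)\bigr),
\]
where $\kappa$ denotes the Kodaira--Spencer map of $f^\circ$. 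Effective parametrization makes $\kappa$ pointwise injective, and $d\mu$ is generically injective because $\mu$ is birational, so the composition is generically injective: $\tilde f^\circ$ has maximal variation, and \cref{main} applies.

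Part \cref{all general} is then immediate: by \cref{Viehweg}, $\tilde W^\circ$ is of log general type, and since $\tilde W^\circ$ is a log resolution of $W$, the same conclusion holds for $W$. For part \cref{Brody}, assume instead that $\gamma:\cb\to V$ is non-constant and let $W$ be the Zariski closure of $\gamma(\cb)$ in $V$, so that $\gamma(\cb)$ is Zariski dense in $W$ and $\dim W\geq 1$. Apply the setup above to this $W$. Since $\tilde W$ is projective and $\mu$ is projective birational, $\gamma$ lifts uniquely to a holomorphic map $\tilde\gamma:\cb\to\tilde W$ (any meromorphic map from a smooth curve into a projective variety is automatically holomorphic), and the inclusion $\gamma(\cb)\subset W$ forces $\tilde\gamma(\cb)\subset\mu^{-1}(W)=\tilde W^\circ$. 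Applying \cref{pseudo} to $\tilde f^\circ$ produces a proper subvariety $Z'\subsetneq\tilde W^\circ$ with $\tilde\gamma(\cb)\subset Z'$. Each irreducible component of $Z'$ has dimension at most $\dim\tilde W-1=\dim W-1$, so $\mu(Z')$ is a subvariety of $W$ of dimension strictly less than $\dim W$. Hence $\gamma(\cb)=\mu\bigl(\tilde\gamma(\cb)\bigr)\subset\mu(Z')\subsetneq W$, contradicting the Zariski density of $\gamma(\cb)$ in $W$.

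The only point that truly requires verification is that the pulled-back family $\tilde f^\circ$ continues to satisfy the hypotheses of \cref{main}, that is, maximal variation is inherited through the log resolution of an arbitrary subvariety. This is exactly where the effective parametrization of $f^\circ$ is used in an essential way; the holomorphic lift of $\gamma$ together with the subsequent dimension count then deliver the contradiction for \cref{Brody} without further work.
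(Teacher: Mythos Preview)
Your proposal is correct and follows essentially the same approach as the paper: pull back the family along a desingularization of an arbitrary irreducible subvariety, use functoriality of the logarithmic Kodaira--Spencer map together with effective parametrization to guarantee maximal variation on the new base, and then invoke \cref{main}. For \cref{Brody} the paper likewise argues by contradiction via the Zariski closure of a hypothetical entire curve and its lift to the resolution; your dimension count with the exceptional set $Z'$ is just an explicit unpacking of the paper's observation that a Zariski dense entire curve is incompatible with pseudo Kobayashi hyperbolicity.
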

According to another conjecture of Lang  \cite[Conjecture 5.6]{Lan86}, \cref{all general} and \cref{Brody} are also expected to be equivalent.   One might expect  that the base $V$  in \cref{cor:Lang} should be moreover \emph{Kobayashi hyperbolic}, which is indeed the case when  fibers of the effectively parametrized family are 
\begin{itemize}[leftmargin=0.7cm]
	\item compact Riemann surfaces of genus $g\geqslant 2$ \cite{Ahl61,Roy74,Wol86};
	\item projective manifolds with ample canonical bundles  \cite{TY14};
		\item Calabi-Yau manifolds (orbifolds) \cite{BPW17,Sch17,TY18}; 
	\item  projective manifolds with big and nef canonical bundles \cite{Den18}.
\end{itemize} 

We conclude the introduction by briefly explaining the proof of \cref{thm:VZ}. Recall that the starting point in the work \cite{VZ02,VZ03,PS17,PTW18,Den18,WW19} is the Viehweg's  $Q_{n,m}$ conjecture on the strong positivity of direct images of pluricanonical bundles, which is known to us when general fibers are of (log) general type or admit  good minimal models, cf.  \cite{Vie83b,Kol87,Kaw85,KP17}. When the fibers are Calabi-Yau klt pairs, Viehweg's  $Q_{n,m}$ conjecture  was only proved recently by Cao-Guenancia-P\u{a}un (see \cref{CGP} below) using very delicate analysis of singular K\"ahler-Einstein metrics. Based on this theorem of  Cao-Guenancia-P\u{a}un, we  apply   Abramovich's $\mathbb{Q}$-mild reduction  for slc families as in \cite{Den18}, to find a good compactification of $V$ so that after replacing the original family by Viehweg's fiber product, one gains enough positivity for the relative dualizing sheaves, which enables us to perform the Viehweg's cyclic cover technique. To construct the desired negatively twisted Higgs bundles,  we mainly follow the general strategies by Viehweg-Zuo \cite{VZ02,VZ03} to generalize their Hodge theoretical methods to the logarithmic setting. However,   different Higgs bundles  defined in the proof of \cref{thm:VZ} are related in a more direct  manner inspired by the recent work of   Popa-Schnell \cite{PS17}  on the alternative construction via   \emph{tautological sections of
	cyclic coverings} (see also the more recent ones   \cite{Wei17,Taj18,WW19}). Our work is also influenced by the work \cite{PTW18}.   Indeed, since the divisor used for the cyclic cover might not be \emph{generically smooth over the base}, we have to perform some ``a priori"   birational modification of the base      so that  certain desingularization of the cyclic cover is smooth over an open set of the base whose complement is a simple normal crossing divisor, by applying an important  technique in \cite[Proposition 4.4]{PTW18}.

In a forthcoming paper we will study the hyperbolicity of bases of   log canonical Calabi-Yau families  using quite different approaches. 
\medskip

\noindent \textbf{Acknowledgements.} I would like to thank Professors Dan Abramovich, Bo Berndtsson and Junyan Cao  for very helpful discussions. I also thank Professors Henri Guenancia, Mihai P\u{a}un and Kang Zuo for their interests on this work.
\section{Positivity of direct image sheaves}
Let us begin this section with the following definitions.
\begin{dfn}
	\begin{thmlist}
		\item \label{def:general type} A quasi-projective irreducible variety $V$ is \emph{of log general type}, if for any desingularization $\tilde{V}\to V$, and any smooth compactification $Y$ of $\tilde{V}$ with the boundary $D:=Y\setminus \tilde{V}$ simple normal crossing,  $K_{Y}+D$ is a big line bundle\footnote{When $V$ is projective, this definition is equivalent to say that the Grauert-Riemenschneider canonical sheaf of $V$ is big.}.
		\item Let $X$ be a quasi-projective variety, and let $D$ be an effective divisor on $X$. $(X,D)$ is called a \emph{smooth log pair} if $X$ is smooth and $D_{\rm red}$ is simple normal crossing.  
		\item  $(X,D)$ is called a \emph{smooth klt pair}, if $(X,D)$ is a smooth log pair  and  all the coefficients of $D$ are in $(0,1)\cap \mathbb{Q}$.
 		\item \label{def:smooth}
	Let $\xs$ and $V$ be   quasi-projective manifolds, and let  $\ds=\sum_{i=1}^{\ell}a_i\ds_i$ be a divisor on $\xs$  with $\sum_{i=1}^{\ell}\ds_i$  simple normal crossing. A surjective projective morphism $(\xs,\ds)\to V$ with connected fibers is called \emph{smooth family of log pairs} if  both $\xs$ and every stratum $\ds_{i_1}\cap \ds_{i_2}\cap\ldots\cap \ds_{i_m}$ are smooth and dominant onto   $V$. Such a $\ds$ is called \emph{relatively normal crossing over $V$}.
	\item Let $(X,D)$ be a smooth log pair,  let $f:(X,D)\to Y$ be a surjective projective morphism and let $V\subset Y$ be a Zariski open set of $Y$. Say $f$ is \emph{smooth over  $V$} if the restriction  $f:(X,D)_{\upharpoonright f^{-1}(V)}\to V$ is  a smooth family of log pairs in the sense of (\lowerromannumeral{4}).
	\end{thmlist}
\end{dfn}

 
As in \cite{VZ03,PTW18,Den18,WW19}, we have to find a good compactification of $V$ and replace the original family by its fiber product  to construct a cyclic cover, along some divisor in the linear system of   relative pluri-canonical bundles with a sufficiently negative twist. Since the direct images of pluri-canonical bundles  are rank 1 coherent sheaves, the proof can be made much simpler than the general setting, which requires delicate analysis of weak positivity, log canonical threshold and vanishing theorems (or $L^2$-extension theorems). 
\begin{thm}\label{thm:set up}
	For the log Calabi-Yau family $f^\circ:(X^\circ,D^\circ)\to V$ as in \cref{main}, there exists  a   projective compactification $Y\supset V$, a smooth log pair   $(X,D)$ and a surjective morphism $f:X\to Y$ so that
\begin{thmlist}
\item each irreducible component of $D$ is dominant onto $Y$, and  $(X,D)\to Y$ is smooth over $V$;
\item $B:=Y\setminus V$ is simple normal crossing, and $\Delta:=f^*B$ is normal crossing.
\item \label{cyclic divisor} for $m$ sufficiently large and divisible, there exist an ample line bundle $\as$ on $Y$ with $\as-B$ also ample, and an effective divisor
 $
\Gamma\in |mK_{X/Y}+mD-mf^*\as|
$.
	\end{thmlist} 
\end{thm}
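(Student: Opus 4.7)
The plan is to follow the three-step strategy outlined in the introduction: construct a good compactification, invoke Cao-Guenancia-P\u{a}un to obtain bigness of a direct image, and then cut out the divisor $\Gamma$ by taking a general section. To begin, starting from $f^\circ: (X^\circ, D^\circ) \to V$, I would choose a smooth projective compactification $Y_0 \supset V$ with $B_0 = Y_0 \setminus V$ simple normal crossing and extend $f^\circ$ to a projective morphism. By iteratively applying log resolution --- combined with Abramovich's $\mathbb{Q}$-mild reduction for slc families, as in \cite{Den18} --- I would arrange, possibly after further birational modifications of $Y$, that $X$ is smooth, $D \cup f^*B$ is simple normal crossing with each component of $D$ dominant onto $Y$, and $f: (X,D) \to Y$ is smooth over $V$, thereby securing (i) and (ii).

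Next, I would establish bigness of the log-relative direct image. Because each $(X_y, D_y)$ is klt with $K_{X_y} + D_y \equiv_{\mathbb{Q}} 0$, for $m$ sufficiently divisible the sheaf $f_*\mathcal{O}_X(m(K_{X/Y} + D))$ has generic rank one; let $\mathscr{L}$ denote its reflexive hull. Applying the Cao-Guenancia-P\u{a}un theorem (\cref{CGP}) --- whose proof rests on careful analysis of singular K\"ahler-Einstein metrics --- together with the maximal variation hypothesis (and, if necessary, passing to Viehweg's $s$-fold fiber product $X^{(s)} \to Y$ to tensorially amplify the rank-one positivity), one concludes that $\mathscr{L}$ is a \emph{big} line bundle on $Y$.

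Finally, I would extract $\Gamma$. Fix an ample line bundle $\mathscr{A}$ on $Y$ such that $\mathscr{A} - B$ is also ample. Bigness of $\mathscr{L}$ yields, for some integer $k \gg 0$, a nonzero global section of $\mathscr{L}^{\otimes k} \otimes \mathscr{A}^{-km}$. Since $\mathscr{L}^{\otimes k}$ embeds as a saturated rank-one subsheaf of $f_*\mathcal{O}_X\bigl(km(K_{X/Y}+D)\bigr)$, the projection formula converts this section into a nonzero element of $H^0\bigl(X, km(K_{X/Y}+D) - km f^*\mathscr{A}\bigr)$, whose zero divisor is the required $\Gamma$ after renaming $km$ as $m$.

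The main obstacle lies in the second step: the positivity delivered by CGP is naturally formulated over $V$, and one must choose the compactification carefully so that the relative $L^2$-metric on $\mathscr{L}$ over $V$ extends with controlled singularities across $B$, upgrading fiberwise positivity to global bigness of the extended $\mathscr{L}$ on $Y$. The $\mathbb{Q}$-mild reduction is crucial here, ensuring that the degenerations over $B$ are tame enough for the singular K\"ahler-Einstein estimates to survive the compactification procedure and for the numerical margin required by $\mathscr{A} - B$ being ample to be achieved.
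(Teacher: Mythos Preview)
Your outline assembles the right ingredients, but the final step contains a genuine gap and the role of $\mathbb{Q}$-mild reduction is misidentified.

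The embedding in your third step points the wrong way. Since $Y$ is smooth, the reflexive hull $\mathscr{L}$ is a line bundle \emph{containing} $f_*\oc_X\bigl(m(K_{X/Y}+D)\bigr)$, not contained in it; likewise $f_*\oc_X\bigl(km(K_{X/Y}+D)\bigr) \hookrightarrow \mathscr{L}^{\otimes k}$, not conversely. A section of $\mathscr{L}^{\otimes k} \otimes \as^{-km}$ therefore need not lie in $H^0\bigl(Y, f_*\oc_X(km(K_{X/Y}+D)) \otimes \as^{-km}\bigr)$, so you cannot invoke adjunction to produce $\Gamma$. There is also a scaling problem: $\mathscr{L}^{\otimes k} \otimes \as^{-km} = (\mathscr{L} \otimes \as^{-m})^{\otimes k}$, so increasing $k$ buys nothing unless $\mathscr{L} - m\as$ is already pseudoeffective, which bigness of $\mathscr{L}$ alone does not give. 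The fiber-product trick is indeed what fixes the scaling, but it requires the exact identity $(f^{(s)})_*\bigl(m(K_{X^{(s)}/Y}+D^{(s)})\bigr) \simeq f_*\bigl(m(K_{X/Y}+D)\bigr)^{\otimes s}$, and once you resolve the (generally singular) fiber product over $Y$ this fails precisely along $B$.

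This is where $\mathbb{Q}$-mild reduction actually enters --- not to tame the singular K\"ahler--Einstein metrics across $B$ (\cref{CGP} already applies to \emph{any} smooth compactification, so no special choice of $Y$ is needed for bigness), but to produce a finite cover $\tau\colon W \to Y$ carrying an slc extension $g\colon(Z,D_Z) \to W$ of the pulled-back family. The Koll\'ar condition forces $g_*\omega_{Z/W}^{[m]}(mD_Z)$ to be an honest line bundle and makes the fiber-product identity \eqref{eq:base chagne} hold exactly on $W$. The paper applies CGP to a log resolution of $(Z,D_Z)$ to obtain bigness on $W$, amplifies by $r$-fold fiber product there so that $\tau_*g_*\omega_{Z/W}^{[m]}(mD_Z)\otimes\as^{-m}\otimes\oc_Y(-mB)$ is generically globally generated, and then descends a genuine section to $f_*\bigl(m(K_{X/Y}+D)\bigr) \otimes \as^{-m}$ on $Y$ via flat base change together with the trace splitting $\tau_*\oc_W \to \oc_Y$. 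That section, pushed through $f^*f_* \to \mathrm{id}$, is $\Gamma$.
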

 
To find the smooth projective compactification $Y$ of $V$ in \cref{thm:set up}, we need  the log version of the \emph{$\mathbb{Q}$-mild reduction} by Abramovich \cite[Corollary A.2]{Den18}. Note that in \cite{VZ02,VZ03} the weakly semi-stable reduction by Abramovich-Karu \cite{AK00} was applied to achieve the same goal. This was substituted in \cite{Den18} by $\mathbb{Q}$-mild reduction, \emph{i.e.}  compactfying the family into Koll\'ar family of slc singularities after taking a finite base change, so that we can control the birational modification of the base precisely. In a recent preprint \cite{WW19}, Wei-Wu applied moduli of stable log pairs  by Kov\'acs-Patakfalvi \cite{KP17} to obtain the similar reduction (which is called  \emph{stable reduction} in \cite[\S 4]{WW19}) for smooth families of klt pairs of log general type. 
\begin{thm}[$\mathbb{Q}$-mild reduction, Abramovich]\label{mild}
	Let $f_0: (S_0,D_0) \to T_0$ be a projective smooth family of log canonical   pairs with $T_0$ quasi-projective manifold. For any given finite subset $Z\subset T_0$,  there exist
	\begin{thmlist}
		\item a compactification $T_0 \subset \underline{\cT}$ with $\underline{\cT}$ a regular projective scheme,  
		\item a simple normal crossings divisor $\Delta \subset \underline{\cT}$ containing $\underline{\cT} \smallsetminus T_0$ and disjoint from $Z$, 
		\item \label{unramified} a finite morphism $W \to \underline{\cT}$ unramified outside $\Delta$, and
		\item \emph{An slc family} $g:(S_W,D_W) \to W$  extending the given family $(S_0,D_0) \times_{\underline{\cT}}W$. That is, each fiber of $g$ is an slc pair, $\omega_{S_W/W}^{[m]}(mD_W)$   is flat over $W$, and commutes with arbitrary base changes for all $m\in \mathbb{Z}_{>0}$ (Koll\'ar condition).
	\end{thmlist}	
	\end{thm}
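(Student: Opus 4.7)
The plan is to deduce the theorem from the properness of a suitable moduli stack of stable log pairs, combined with a finite base change that ensures the Koll\'ar condition on reflexive tensor powers of $\omega_{S_W/W}(D_W)$.

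First I would fix any projective compactification $\bar T \supset T_0$ and consider the classifying morphism $T_0 \to \mathcal{M}$ to the moduli stack of stable slc log pairs of the same Hilbert polynomial as the fibers of $f_0$, whose properness was established in the work of Koll\'ar--Shepherd-Barron--Alexeev and completed by Hacon--Xu and Koll\'ar--Patakfalvi. After replacing $\bar T$ by a resolution of the indeterminacy locus of this rational map, one may apply the valuative criterion of properness along each generic point of $\bar T \setminus T_0$; this produces, after a finite cover $W_0 \to \bar T$ ramified only along a divisor $\bar \Delta$ containing $\bar T \setminus T_0$, an slc extension of the pullback family. Log resolution then replaces $W_0$ by a regular projective scheme $\underline{\cT}$ and enlarges $\bar\Delta$ to a simple normal crossing divisor $\Delta$; because $Z$ is a finite subset of the smooth locus $T_0$, one can always insert further blow-ups with centers inside $\bar T \setminus T_0$ to arrange $\Delta \cap Z = \emptyset$, and these blow-ups do not affect conditions (iii) and (iv) once a smooth extension is already in place on a neighbourhood of $Z$.

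The Koll\'ar condition in (iv) is then automatic: the tautological family over the moduli stack $\mathcal{M}$ satisfies by construction that $\omega^{[m]}(mD)$ is flat and commutes with arbitrary base change, so pulling back along the classifying morphism $W \to \mathcal{M}$ inherits this property for every $m \in \mathbb{Z}_{>0}$. The only thing to check is that the extension agrees with $(S_0, D_0) \times_{\underline{\cT}} W$ over $T_0$; this is clear since on $f_0^{-1}(T_0)$ the original family is already smooth, hence in particular slc and stable, so uniqueness of the slc limit forces the two families to coincide there.

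The hardest step, and the main obstacle, is ensuring that a single finite base change $W \to \underline{\cT}$ simultaneously (i) kills the monodromy of the period/classifying map around each boundary component so that the slc family extends, (ii) maintains the Koll\'ar flatness of $\omega^{[m]}(mD)$ for \emph{all} $m$ divisible enough, and (iii) leaves the branch divisor contained in a snc divisor disjoint from $Z$. Points (i) and (iii) are handled by Abramovich--Karu style logarithmic semistable reduction applied to a sufficiently fine toroidal model of $(\underline{\cT}, \Delta)$, while (ii) is the genuinely new input requiring the KSBA moduli of stable families of slc pairs rather than an ad hoc compactification; this is exactly the content of Abramovich's appendix in \cite{Den18}, and once invoked the theorem follows.
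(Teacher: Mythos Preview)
The paper does not give a self-contained proof of this statement; it is cited as Abramovich's result from \cite[Corollary A.2]{Den18}, with the remark that the log version follows because the relevant properness input (\cite[Theorem A.3]{Den18}) holds for moduli of Alexeev stable \emph{maps} of slc pairs by \cite[Theorem 1.5]{DR18}.

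Your sketch has the right overall architecture---properness of a moduli stack plus the valuative criterion to produce an slc extension after a finite base change---but it invokes the wrong moduli problem, and this is a genuine gap. You appeal to the KSBA moduli stack $\mathcal{M}$ of stable slc log pairs and assert that the smooth fibers of $f_0$ are ``in particular slc and stable.'' Stability in the KSBA sense requires $K_{S_y}+D_y$ to be ample, which is not assumed in the theorem (the fibers are only log canonical) and is outright false in the paper's main application, where the fibers are log Calabi--Yau with $K_{S_y}+D_y\equiv_{\mathbb{Q}}0$. So there is no classifying morphism $T_0\to\mathcal{M}$ to begin with, and the argument does not start. Abramovich's actual mechanism circumvents this: one fixes a relatively very ample polarization, embeds the family in a projective space, and uses the properness of the moduli of Alexeev stable \emph{maps} $(S,D)\to\mathbb{P}^N$; this supplies the needed properness without any positivity assumption on $K+D$.

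A secondary issue: your construction conflates the roles of $\underline{\mathcal{T}}$ and $W$. You take a finite cover $W_0\to\bar{T}$ and then write ``log resolution replaces $W_0$ by a regular projective scheme $\underline{\mathcal{T}}$,'' but in the statement $\underline{\mathcal{T}}$ is the regular compactification of $T_0$ sitting \emph{below} the finite cover, not a resolution of something upstairs. The correct order is: compactify and resolve $T_0$ to obtain $(\underline{\mathcal{T}},\Delta)$ with $\Delta$ simple normal crossing and disjoint from $Z$, then take the finite cover $W\to\underline{\mathcal{T}}$ branched along $\Delta$, and finally extend the pulled-back family to an slc family over $W$.
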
 
We will not recall the basic properties of  $slc$ families, and we refer the readers to \cite{AT16,Pat16} for further details. 
  Let us mention that although  \cref{mild}   is   only stated for the compact setting  in \cite[Corollary A.2]{Den18}, \emph{i.e.} $D_0=\varnothing$, its proof also holds for the log cases, since  \cite[Theorem A.3]{Den18} holds for moduli of Alexeev stable map of slc pairs by \cite[Theorem 1.5]{DR18}.

We will need the following  lemma \cite[Proposition 5.2]{Bou04}, whose proof is a standard   application  of $L^2$-methods (see e.g. \cite{Dem12}).
\begin{lem}\label{lem:Kodaira}
	Let $X$ be a projective manifold equipped with two line bundle $G$ and $L$, and let  $\{x_1,\ldots,x_\ell\}\subset X\setminus \mathbf{B}_+(L)$ be any finite set. If $L$ is big, then  for any  $m\gg 0$, one has the following surjection
	$$
	H^0(X,mL+G)\twoheadrightarrow  \bigoplus_{j=1}^{\ell}J_{x_j}(mL+G)
	$$
	where $J_{x_j}(mL+G):= (mL+G)\otimes \oc_X/\mathfrak{m}_{x_j}$.
	\end{lem}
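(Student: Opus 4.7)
Since the claimed surjection is equivalent to the cohomological vanishing
$$H^1\bigl(X,\,\mathcal{I}\otimes \mathcal{O}_X(mL+G)\bigr)=0,$$
where $\mathcal{I}:=\prod_{j=1}^{\ell}\mathfrak{m}_{x_j}$ is the ideal sheaf of $Z:=\{x_1,\dots,x_\ell\}$, the plan is to produce, for every $m\gg 0$, a singular Hermitian metric $h$ on the twist $mL+G-K_X$ whose curvature current is strictly positive and whose multiplier ideal satisfies $\mathcal{J}(h)\subseteq \mathcal{I}$; Nadel's vanishing theorem will then deliver the required vanishing, and hence the surjection, from the short exact sequence $0\to \mathcal{I}\otimes \mathcal{O}_X(mL+G)\to \mathcal{O}_X(mL+G)\to \bigoplus_j J_{x_j}(mL+G)\to 0$.

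The key ingredient is a Kodaira-type decomposition of $L$ compatible with $Z$. By the standard description of the augmented base locus, $\mathbf{B}_+(L)=\mathbf{B}(L-\delta A_0)$ for any fixed ample $A_0$ and every sufficiently small rational $\delta>0$. Since $Z\cap \mathbf{B}_+(L)=\varnothing$, I can choose $N>0$ sufficiently divisible and an effective divisor $E\in |NL-N\delta A_0|$ with $\mathrm{Supp}(E)\cap Z=\varnothing$; setting $A:=N\delta A_0$ yields
$$NL\sim A+E,\qquad A\ \text{ample},\quad E\ \text{effective},\quad \mathrm{Supp}(E)\cap Z=\varnothing.$$
Enlarging $N$, the ampleness of $A$ combined with Serre vanishing on the blowup of $X$ along $Z$ provides for some $p\gg 0$ sections $\sigma_1,\dots,\sigma_r\in H^0\bigl(X,\,pA\otimes \prod_j \mathfrak{m}_{x_j}^{s}\bigr)$ with no common zero outside $Z$, where $s$ is chosen large enough (in terms of $n=\dim X$) to force the multiplier ideal inclusion below. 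Fixing a smooth Hermitian metric $h_A$ on $A$ of strictly positive curvature, the weight
$$\psi:=\tfrac{1}{p}\log\sum_{i=1}^{r}|\sigma_i|_{h_A^{\otimes p}}^{2}$$
is quasi-psh on $X$ with $i\partial\bar\partial\psi\geqslant -c_1(A,h_A)$ and has Lelong number at least $s/p$ at every $x_j$; Skoda's integrability theorem then ensures $\mathcal{J}(\psi)\subseteq \mathcal{I}$ in a neighbourhood of $Z$.

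For the assembly step, write $m=qN+t$ with $0\leqslant t<N$ and decompose
$$mL+G-K_X=(q-1)(A+E)+\bigl(NL+tL+G-K_X\bigr).$$
Since $L$ is big and $t$ takes only finitely many values, the line bundle $NL+tL+G-K_X+c_1(A)$ becomes ample for all $t\in\{0,\dots,N-1\}$ once $q$ is large. Equip this bundle with a smooth Hermitian metric of strictly positive curvature, endow $(q-1)E$ with the divisorial metric $\log|s_E|^2$, and endow $(q-1)A$ with $(q-1)\psi$. Gluing these yields a singular Hermitian metric $h$ on $mL+G-K_X$ of strictly positive curvature current; moreover $\mathcal{J}(h)\subseteq \mathcal{I}$, since $(q-1)\psi$ forces the inclusion near $Z$ while the divisorial piece contributes no Lelong mass at $Z$ (because $\mathrm{Supp}(E)\cap Z=\varnothing$). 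Nadel vanishing applied to $(mL+G-K_X,h)$ then produces the required surjection.

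The main obstacle is the simultaneous control of strict positivity of the curvature current and of the inclusion $\mathcal{J}(h)\subseteq \mathcal{I}$: concentrated Lelong mass at $Z$ must be absorbed by an ample factor of a Kodaira-type decomposition of $L$ without spoiling positivity. It is precisely here that the hypothesis $Z\cap \mathbf{B}_+(L)=\varnothing$ plays its role, as it allows one to select the effective part $E$ disjoint from $Z$ so that the divisorial singularity along $E$ and the point singularities at $Z$ are supported on disjoint loci and cannot compete.
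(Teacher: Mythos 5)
The paper does not give a proof of this lemma: it simply cites \cite[Proposition 5.2]{Bou04} and remarks that the argument is a standard application of $L^2$-methods. Your proposal reconstructs exactly the standard $L^2$ proof (Kodaira-type decomposition supported away from $Z$, a singular weight with prescribed Lelong numbers at $Z$, Nadel vanishing), and the overall strategy is correct. Two points, however, need to be tightened before the argument is airtight.

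First, the curvature accounting in the assembly step is off. You decompose $mL+G-K_X=(q-1)(A+E)+(NL+tL+G-K_X)$ and then speak of a smooth strictly positive metric on $NL+tL+G-K_X+c_1(A)$, but the summand $+A$ is not present in the decomposition; and you simultaneously place the weight $(q-1)\psi$ on $(q-1)A$. If $(q-1)\psi$ is read as the full local weight (metric $e^{-(q-1)\psi}$), its curvature current is $\geqslant -(q-1)c_1(A,h_A)$, which grows negatively with $q$ and is not compensated by a single ample summand. If instead it is read as the weight relative to $h_A^{q-1}$ (metric $h_A^{q-1}e^{-(q-1)\psi}$), the curvature is only $\geqslant 0$ and one must still produce a strictly positive contribution from a piece that does not appear in your decomposition. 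The clean fix is either to write $mL+G-K_X=(q-2)(A+E)+E+\bigl(2NL+tL+G-K_X\bigr)$ and park one extra copy of $A$ in the smoothly-metrized remainder, or to use a \emph{fixed} power $\psi^{\otimes k}$ with $k$ just large enough that the Lelong numbers at $Z$ exceed $n$, and let the remaining $(q-1-k)$ copies of $A$ carry the smooth metric $h_A$, so that the total curvature is $\geqslant (q-1-k)c_1(A,h_A) - kc_1(A,h_A) - C > 0$ for $q\gg 0$. Also note that the ampleness of the residual line bundle is decided once $N$ (equivalently $\delta$) is fixed; it is not a statement about "$q$ large", so the quantifiers there should be rearranged (choose $N$ large and sufficiently divisible at the outset, so that $2A+E+tL+G-K_X$ is ample for all $t<N$).

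Second, the reduction from $\mathcal{J}(h)\subseteq\mathcal{I}$ to the desired surjection is stated too quickly. Nadel vanishing gives $H^1\bigl(X,\mathcal{J}(h)\otimes\mathcal{O}_X(mL+G)\bigr)=0$, hence a surjection $H^0(X,mL+G)\twoheadrightarrow H^0\bigl(X,(mL+G)\otimes\mathcal{O}_X/\mathcal{J}(h)\bigr)$; but for an arbitrary inclusion $\mathcal{J}(h)\subseteq\mathcal{I}$ the induced map $H^0\bigl(\mathcal{O}_X/\mathcal{J}(h)\bigr)\to H^0\bigl(\mathcal{O}_X/\mathcal{I}\bigr)$ need not be surjective. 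What saves the argument — and should be said explicitly — is that by construction the cosupport of $\mathcal{J}(h)$ decomposes as the disjoint union of $Z$ (where the singularity of the weight is isolated, since the $\sigma_i$ have no common zeros outside $Z$) and a closed subset of $\mathrm{Supp}(E)$ (which is disjoint from $Z$). Hence $\mathcal{O}_X/\mathcal{J}(h)$ splits as a direct sum of a sheaf supported at $Z$ and one supported away from $Z$, and the factor at $Z$ surjects stalkwise onto $\bigoplus_j\mathcal{O}_X/\mathfrak{m}_{x_j}$; this is what makes the composed map on global sections surjective. With these two repairs the proof is complete and in line with the cited reference.
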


  Now we   recall a difficult theorem by Cao-Guenancia-P\u{a}un \cite{CGP17} on the proof  of Viehweg's ${Q}_{n,m}$ conjecture for families of  Calabi-Yau  klt  pairs, which is the starting point for the proof of  \cref{thm:set up}. Their proof requires quite delicate analysis on the variation of singular K\"ahler-Einstein metrics. 
  \begin{thm}[\!\!\protect{\cite[Corollary D]{CGP17}}]\label{CGP}
  	For $f^\circ:(X^\circ,D^\circ)\to V$ as in \cref{main}, let $f:(X,D)\to Y$ be any projective compactification, so that $(X,D)$ is a smooth klt   pair, and $Y$ is smooth. Then $ f_*(mK_{X/Y}+mD) ^{\star\star}$ is a big line bundle for $m$ sufficiently large and divisible.
  \end{thm}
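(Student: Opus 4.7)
The plan is to endow the reflexive rank-one sheaf $\mathcal{F}_m := f_*\bigl(m(K_{X/Y}+D)\bigr)^{\star\star}$ with a singular hermitian metric whose curvature current is semi-positive on all of $Y$ and strictly positive on a non-empty Zariski open subset of $V$, and then to conclude bigness via the standard numerical criterion of Boucksom. Since $K_{X_y}+D_y\equiv_{\mathbb{Q}}0$ and $(X_y,D_y)$ is klt, the space $H^0\bigl(X_y, m(K_{X_y}+D_y)\bigr)$ is one-dimensional for $m$ sufficiently divisible, so $\mathcal{F}_m$ has generic rank one and, being reflexive on the smooth base $Y$, is genuinely a line bundle.

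First I would invoke the Eyssidieux--Guedj--Zeriahi and Berman--Guenancia existence theory to equip each fibre $(X_y,D_y)$ with its canonical singular K\"ahler--Einstein metric $\omega_y$ of zero Ricci curvature; with an appropriate normalisation, $\omega_y^n$ picks out a preferred generator $\sigma_y$ of $H^0\bigl(X_y, m(K_{X_y}+D_y)\bigr)$ and, equivalently, a preferred relative volume form $\mu$ on $X$ adapted to $D$. The fibre-integral recipe $\|s\|_y^{2/m} := \int_{X_y}|s|^{2/m}\,\mu_y^{\,1-1/m}$ then defines the sought metric $h_{KE}$ on $\mathcal{F}_m$.

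Semi-positivity of $h_{KE}$ is the logarithmic / klt Calabi--Yau analogue of Berndtsson's positivity theorem for direct images, and reduces to the statement that the complex Monge--Amp\`ere potential solving the fibrewise KE equation varies plurisubharmonically in the base parameter. Strict positivity on a Zariski open set would then follow by identifying the curvature class of $h_{KE}$ with the logarithmic Weil--Petersson form associated to the Kodaira--Spencer map $T_V\to R^1 f^\circ_*\bigl(T_{X^\circ/V}(-\log D^\circ)\bigr)$: because the family is of maximal variation, this map is generically injective, so the Weil--Petersson form, and hence the curvature of $h_{KE}$, is strictly positive on a non-empty Zariski open subset of $V$.

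The main obstacle, which is the heart of the analytic work of Cao--Guenancia--P\u{a}un, is to control the singular behaviour of $\omega_y$ near the boundary $D_y$ uniformly in $y\in V$. This is needed to justify differentiation under the fibre integral, to establish the plurisubharmonic variation of the KE potentials in the presence of the klt boundary, and to identify the curvature current of $h_{KE}$ with the Weil--Petersson form. Technically one performs a careful regularisation of the fibrewise Monge--Amp\`ere equation, establishes uniform $C^0$ estimates and local higher-order estimates away from $D$, and then invokes a Berndtsson-type subharmonicity argument together with a maximum principle to pass to the limit; once these estimates are in place, the bigness of $\mathcal{F}_m^{\star\star}$ follows from the strict positivity produced in the previous step combined with Boucksom's criterion.
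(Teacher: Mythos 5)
The paper does not prove this statement: it is quoted verbatim as \cite[Corollary D]{CGP17}, and the surrounding text explicitly describes it as ``a difficult theorem by Cao--Guenancia--P\u{a}un\dots\ whose proof requires quite delicate analysis on the variation of singular K\"ahler--Einstein metrics.'' There is therefore no in-paper proof against which to compare your sketch; the result is used as a black box to launch the proof of Theorem~\ref{thm:set up}.

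As a summary of the CGP strategy your sketch is broadly reasonable --- fibrewise singular Ricci-flat metrics, a fibre-integral metric on the rank-one direct image, Berndtsson-type plurisubharmonic variation, and strict positivity coming from maximal variation --- but it misplaces the hardest analytic point. You locate the obstacle in uniform control near the fibrewise boundary $D_y\subset X_y$ for $y\in V$. The more delicate issue is the behaviour at the degeneration locus $Y\setminus V$ in the base: the metric $h_{KE}$ is a priori defined only over $V$, and extending it across $Y\setminus V$ as a singular hermitian metric on $f_*(m(K_{X/Y}+D))^{\star\star}$ with a semipositive, locally integrable curvature current on all of $Y$ is logically prior to any bigness argument and requires genuine estimates as the fibres degenerate. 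You should also be careful with the final step: ``curvature $\geq 0$ on $Y$ and $>0$ on a Zariski-open subset'' does not by itself yield bigness of a line bundle; one needs a non-pluripolar volume argument (Boucksom) or an upgrade to a K\"ahler current. Finally, the claim that the curvature of $h_{KE}$ \emph{equals} a logarithmic Weil--Petersson form is a helpful heuristic, but the strictness in CGP is established by showing that degeneracy of the curvature along a base direction forces the infinitesimal deformation in that direction to vanish, which is a weaker and more robust statement than a closed-form Weil--Petersson identity.
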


Let us begin to prove \cref{thm:set up}. We mainly follow the proof in \cite{VZ03,Den18}, and our proof is also influenced by \cite[Proposition 4.8]{WW19}.
\begin{proof}[Proof of \cref{thm:set up}]
  By the $\qb$-mild reduction in \cref{mild}, we can take a projective compactification $Y\supset V$ with $B:=Y\setminus V$ simple normal crossing, and a finite morphism $\tau:W\to Y$ so that $(X^\circ,D^\circ)\times_YW$  extends to an slc family $(Z,D_Z)\to W$. We denote by $Z^r:=Z\times_W\cdots\times_WZ$ the $r$-folded fiber product of $Z\to W$, and set ${\rm pr}_j:Z^r\to Z$ to be the projection to the $j$-th factor. Write $D_Z^r:=\sum_{j=1}^{r}{\rm pr}_j^*D_Z$. As is well-known, $g^r:(Z^r,D_Z^r)\to W$ is also an slc family. 
\begin{claim}\label{claim}
	For some sufficiently large and divisible $m$, $\omega_{Z/W}^{[m]}(mD_Z)$ is invertible, $g_*(\omega_{Z/W}^{[m]}(mD_Z))$ is a big line bundle, and for any $r\in \mathbb{Z}_{>0}$, one has  
\begin{align}\label{eq:base chagne}
(g^{r})_*\Big(\big(\omega_{Z^{r}/W}^{[m]}(mD_Z^r))=g_*\big(\omega_{Z/W}^{[m]}(mD_Z)\big)^{\otimes r}.
\end{align}
	\end{claim}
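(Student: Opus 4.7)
My plan is to verify the three assertions of the Claim in sequence, relying on the Koll\'ar flatness-and-base-change condition from \cref{mild} and the Cao-Guenancia-P\u{a}un bigness statement \cref{CGP}. For the first assertion, I choose $m$ large and divisible enough that $m(K_{Z_w}+D_{Z,w})\sim 0$ on every fiber $(Z_w,D_{Z,w})$: this is possible because the generic fiber is a klt Calabi-Yau pair with $K+D\equiv_{\mathbb{Q}}0$, producing a Cartier index $m_0$, and the slc special fibers produced by $\mathbb{Q}$-mild reduction share a uniform Cartier index over the connected base $W$. Consequently $\omega_{Z_w}^{[m]}(mD_{Z,w})\cong\mathcal{O}_{Z_w}$ is fiberwise locally free of rank one, and the Koll\'ar condition (flatness over $W$ plus compatibility with base change) upgrades the reflexive sheaf $\omega_{Z/W}^{[m]}(mD_Z)$ to a genuine line bundle on $Z$ via the standard fiberwise locally-free criterion.

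For the bigness of $g_*(\omega_{Z/W}^{[m]}(mD_Z))$ I fix a smooth projective compactification $f:(X,D)\to Y$ of $f^\circ$ with $D$ snc on $X$. By \cref{CGP}, $\mathcal{L}:=f_*(mK_{X/Y}+mD)^{\star\star}$ is a big line bundle on $Y$ for $m$ large and divisible, so its pullback $\tau^*\mathcal{L}$ along the finite surjective morphism $\tau:W\to Y$ is big on $W$. Over the open set $V_W:=\tau^{-1}(V)$ the family $g$ agrees with the base change of $f^\circ$, so
\[
g_*\bigl(\omega_{Z/W}^{[m]}(mD_Z)\bigr)\big|_{V_W}\;=\;\tau^*\mathcal{L}\big|_{V_W}.
\]
Both sides are line bundles on $W$ by the first step, so they differ by a Cartier divisor supported on the boundary $W\setminus V_W$. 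The whole point of using $\mathbb{Q}$-mild reduction is that this discrepancy is effective, with the Koll\'ar pushforward giving the correct semipositive extension; this yields an inclusion $\tau^*\mathcal{L}\hookrightarrow g_*(\omega_{Z/W}^{[m]}(mD_Z))$ and hence transfers bigness.

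Finally, the identity \eqref{eq:base chagne} decomposes into two standard facts for flat Koll\'ar families: on the one hand, applying the Koll\'ar condition to each projection $\operatorname{pr}_j$ together with the pluricanonical identity for fiber products gives
\[
\omega_{Z^r/W}^{[m]}(mD_Z^r)\;\cong\;\bigotimes_{j=1}^{r}\operatorname{pr}_j^{*}\omega_{Z/W}^{[m]}(mD_Z),
\]
and on the other hand iterated flat base change along the Cartesian squares defining $Z^r$, combined with the projection formula, yields
\[
(g^r)_{*}\Bigl(\bigotimes_{j=1}^{r}\operatorname{pr}_j^{*}L\Bigr)\;=\;(g_{*}L)^{\otimes r}
\]
for $L=\omega_{Z/W}^{[m]}(mD_Z)$; composing the two identifications produces \eqref{eq:base chagne}. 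The main obstacle I anticipate is the codimension-one comparison in the bigness step: verifying that the Koll\'ar pushforward on $W$ really dominates $\tau^*\mathcal{L}$ (rather than the other way around) relies on the unramifiedness of $\tau$ outside $\Delta$ from \cref{unramified} together with the slc Calabi-Yau nature of the degenerate fibers of $g$, and this local boundary analysis is where the technical weight of the argument sits.
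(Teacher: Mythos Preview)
Your arguments for invertibility and for the product formula \eqref{eq:base chagne} are essentially fine; the paper handles the first by simply noting that $(Z,D_Z)$ is klt (hence $K_Z+D_Z$ is $\mathbb{Q}$-Cartier, so $m(K_Z+D_Z)$ is Cartier for $m$ divisible and $W$ is smooth), and handles the third by the Koll\'ar base-change property exactly as you sketch.

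The bigness step, however, has a genuine gap, and you have correctly identified it yourself. You apply \cref{CGP} to a compactification $f:(X,D)\to Y$ of the \emph{original} family over $Y$, obtain a big line bundle $\mathcal{L}=f_*(mK_{X/Y}+mD)^{\star\star}$, and then need an inclusion
\[
\tau^*\mathcal{L}\ \hookrightarrow\ g_*\bigl(\omega_{Z/W}^{[m]}(mD_Z)\bigr)
\]
on all of $W$. Agreement over the open set $V_W$ says nothing about the sign of the boundary discrepancy; for two line bundles agreeing on a dense open, neither need contain the other. In fact, the comparison the paper carries out later in the proof of \cref{thm:set up} goes in the \emph{opposite} direction, namely $g_*(mK_{Z/W}+mD_Z)\to \tau^*f_*(mK_{X/Y}+mD+mf^*B)$, and requires the extra twist by $mf^*B$. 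So your asserted inclusion is not obvious and may well be false without further hypotheses; the ``local boundary analysis'' you defer is the entire content of the step.

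The paper avoids this comparison altogether by applying \cref{CGP} directly over $W$. One takes a strong log resolution $\mu:\tilde{Z}\to Z$ of the klt pair $(Z,D_Z)$, writes $\mu^*(K_Z+D_Z)=K_{\tilde{Z}}+\tilde{D}_Z-\tilde{E}$ with $\tilde{E}$ effective exceptional and $(\tilde{Z},\tilde{D}_Z)$ a smooth klt pair, and observes that $\tilde{g}:=g\circ\mu$ still has generically injective logarithmic Kodaira--Spencer map. Then \cref{CGP} gives that $\tilde{g}_*(mK_{\tilde{Z}/W}+m\tilde{D}_Z)^{\star\star}$ is big, and the klt discrepancy formula yields the \emph{equality}
\[
\tilde{g}_*(mK_{\tilde{Z}/W}+m\tilde{D}_Z)\ =\ g_*(mK_{Z/W}+mD_Z),
\]
so no boundary comparison is needed at all. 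This is the missing idea: resolve $Z$ rather than compactify $X^\circ$, so that CGP is invoked on a family living over $W$ from the start.
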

\begin{proof}[Proof of \cref{claim}]
By \cite[Proposition 4.4]{AT16}, $(Z,D_Z)$ is a log canonical pair, and in particular $K_Z+D_Z$ is $\mathbb{Q}$-Cartier. Moreover, it was proved    in \cite[Proposition 4.1]{WW19}  that $(Z,D_Z)$ is  even klt.  
 Take a strong log resolution $\mu:\tilde{Z}\to Z$ of $(Z,D_Z)$  with
\begin{align}\label{eq:modification}
\mu^*(K_Z+D_Z)=K_{\tilde{Z}}+\tilde{D}_Z-\tilde{E},
\end{align}
where $\tilde{D}_Z$ and $\tilde{E}$ are both effective $\qb$-divisors, such that $(\tilde{D}_Z+\tilde{E})_{\rm red}$ is simple normal crossing,  $\tilde{E}$ is exceptional and the coefficients of irreducible components of $\tilde{D}_Z$ are all in $(0,1)$. Take $W_0:=\tau^{-1}(V)$, and $Z_0:=g^{-1}(W_0)$. Then $Z_0$ is smooth, and $\mu:\mu^{-1}(Z_0)\to Z_0$ is isomorphism. By the assumption, the logarithmic Kodaira-Spencer map of $\tilde{Z}\to W$ is also generically injective, and by  \cref{CGP} of Cao-Guenancia-P\u{a}un, for $m\gg 0$ large and divisible enough
 $
 \tilde{g}_*(mK_{\tilde{Z}/W}+m\tilde{D}_Z)^{\star\star}
$ is a big line bundle. Here we denote by $\tilde{g}:=g\circ \mu$. On the other hand, by \eqref{eq:modification} one has
$$
\tilde{g}_*(mK_{\tilde{Z}/W}+m\tilde{D}_Z)={g}_*(mK_{{Z}/W}+m {D}_Z),
$$
which is a reflexive sheaf of rank 1, and thus invertible. Therefore, ${g}_*(mK_{{Z}/W}+m {D}_Z)$ is a big line bundle as well. \eqref{eq:base chagne} follows from the base change property of slc families.
\end{proof}
\cref{claim} allows us to replace the original family $(X^\circ,D^\circ)\to V$ by its fiber product to increase the positivity of the direct images. 

Set $B_1\subset Y$ be the proper analytic subset of $Y$ so that $\tau:\tau^{-1}(Y\setminus B_1)\to Y\setminus B_1$ is \'etale. Since $B_2:=\tau\big(\mathbf{B}_+({g}_*(mK_{{Z}/W}+m {D}_Z))\big)$ is a proper subset of $Y$, $V_0:=V\setminus (B_1\cup B_2)$ is a non-empty open set of $V$. Then for any fixed $y\in V_0$, $\tau$ is umramified at $y$, and $\tau^{-1}(y)\subset W\setminus \mathbf{B}_+({g}_*(mK_{{Z}/W}+m {D}_Z))$.  It follows from \cref{lem:Kodaira} that one can take $r\gg 0$ so that
$$
H^0\Big(W,r g_*\big(\omega_{Z/W}^{[m]}(mD_Z)\big)-\tau^*(m\as+mB)\Big) 
$$
generates jet of order 1 at all points in  the finite set $\tau^{-1}(y)$. 
In other words, the locally free sheaf $\tau_*\big(  g_*\big(\omega_{Z/W}^{[m]}(mD_Z)^{\otimes r}\big)\otimes \as^{-m}\otimes \oc_Y(-mB)$ is generated by global sections at $y$, and thus generically over $Y$. Now we replace $(X^\circ,D^\circ)\to V$ by its $r$-folded fiber product, and $(Z,D_Z)\to W$ will also be replaced by the $r$-folded fiber product automatically. We keep the same notation for simplicity. By \cref{claim}, the locally free sheaf 
\begin{align}\label{eq:locally free}
\tau_*g_* \omega_{Z/W}^{[m]}(mD_Z) \otimes \as^{-m}\otimes \oc_Y(-mB)
\end{align}
is \emph{generically generated by global sections}. 

 Take a smooth projective compactification $X\supset X^\circ$, and define $D$ to be the closure of $D^\circ$ in $X$.  After passing to a   blow-up of $X$ with the center in $X\setminus X^\circ $, we can assume that that $f:(X,D)\to Y$ is a surjective morphism,   and  $f^*B+D$ is normal crossing.  In particular, $(X,D)$ is  a smooth  klt pair, which is  smooth over $V$. 
 Set ${Z}_1:=X\times_YW$, and   $Z_2$ denotes to be its normalization. Since $Z_2$ is birational to ${Z}$, one can even assume that the  log resolution $\tilde{Z}\to Z$ in the proof of \cref{claim} resolves the birational map $Z\dashrightarrow Z_2$, and $\tilde{g}^*(W\setminus W_0)$ is normal crossing.
\begin{equation}\label{dia:inter}
 \begin{tikzcd}
 X \arrow[d, "f"]
 & Z_1\arrow[l, "h"'] \arrow[d, "g_1"']&  \tilde{Z}\arrow[l, "\nu"'] \arrow[ll, bend right, "\phi"'] \arrow[d, "\tilde{g}"]\arrow[r,"\mu"]& Z\arrow[d,"g"] \\
 Y 
 & W  \arrow[l,"\tau"']& W  \arrow[r,"="] \arrow[l,  "="' ]& W
 \end{tikzcd}
 \end{equation}
Since $\tau:W\to Y$ is flat, by \cite[Proof of Lemma 3.3]{Vie83} or \cite[(4.10)]{Mor85}, $Z_1$ is irreducible Gorenstein, $h^*\omega_{X/Y}=\omega_{Z_1/W}$ and $\nu_*\omega_{\tilde{Z}/W}\subset \omega_{Z_1/W}$. By flat base change and the projection formula,  one has 
\begin{align*}
\tilde{g}_*\big(mK_{\tilde{Z}/W}+m\phi^*(D+f^*B)\big)\hookrightarrow(g_1)_*\big(\omega^m_{{Z}_1/W}\otimes h^*\oc_X(mD+mf^*B)\big)\\
=(g_1)_*\Big(h^*\big(\omega^m_{X/Y}\otimes  \oc_X(mD+mf^*B)\big)\Big) 
\stackrel{\simeq}{\to} \tau^*f_*(mK_{X/Y}+mD+mf^*B),
\end{align*}
 which is an isomorphism over $W_0$. 
Since $\mu:\tilde{g}^{-1}(W_0)\to Z_0$ is  an isomorphism, then  $\tilde{g}^{-1}(W_0)\simeq X_0\times_YW$, and thus   
 $
\phi^*(D+f^*B)-\tilde{D}_Z
$ 
is  supported on $\tilde{g}^*(W\setminus W_0)$.  Since $\tilde{D}_Z$ is klt, and $\phi^*f^*B=\tilde{g}^*\tau^*B\geqslant \tilde{g}^*(W\setminus W_0)$,  $
\phi^*(D+f^*B)-\tilde{D}_Z
$ is  thus effective, as also observed in \cite[Proof of Proposition 4.8]{WW19}. One thus has the following morphism 
\begin{align*}
{g}_*(mK_{{Z}/W}+m {D}_Z)=\tilde{g}_*\big(mK_{\tilde{Z}/W}+m\tilde{D}_Z\big)\to \\  \tilde{g}_*\big(mK_{\tilde{Z}/W}+m\phi^*(D+f^*B)\big)\to \tau^*f_*(mK_{X/Y}+mD+mf^*B).
\end{align*}
which is an isomorphic over $W_0$. Hence the morphism
$$
\tau_*{g}_*(mK_{{Z}/W}+m {D}_Z) \to \tau_* \tau^*f_*(mK_{X/Y}+mD+mf^*B)=f_*(mK_{X/Y}+mD+mf^*B)\otimes \tau_*\oc_W.
$$
is isomorphic over $V$.  Thanks to the generically global generation of \eqref{eq:locally free},    $f_*(mK_{X/Y}+mD)  \otimes \as^{-m}\otimes  \tau_*\oc_W$ is also generically generated by global sections. Since the trace map $\tau_*\oc_W\to \oc_Y$ is a splitting surjective morphism, $ f_*(mK_{X/Y}+mD)  \otimes \as^{-m} $ is an effective line bundle. Hence any non-zero section $s\in H^0\big(Y,f_*(mK_{X/Y}+mD)  \otimes \as^{-m} \big)$  will be sent to an effective divisor  $
\Gamma_s\in |mK_{X/Y}+mD-mf^*\as|
$ via the following natural morphism
\begin{align}\label{eq:adjunction}
f^*\big(f_*(mK_{X/Y}+mD)  \otimes \as^{-m} \big)\to mK_{X/Y}+mD-mf^*\as.  
\end{align} 
\end{proof}
\begin{rem}
\begin{thmlist}
		\item \label{non-vanishing}	Since $mK_{X/Y}+mD$ is relatively trivial over $f^{-1}(V)$,    the morphism \eqref{eq:adjunction} is thus an isomorphism over $V$. In particular, the   effective divisor $\Gamma_s\in H^0(X,mK_{X/Y}+mD-mf^*\as)$ induced by $s\in H^0\big(Y,f_*(mK_{X/Y}+mD)  \otimes \as^{-m} \big)$ is supported on  $  f^{-1}\big((s=0)\cup B\big)$.
			\item \label{multiplicity 1}Assume that $\as=2\ls$, where $\ls$ is another very ample line bundle on $Y$. Then for any non-zero $s\in H^0\big(Y,f_*(mK_{X/Y}+mD)  \otimes \as^{-m} \big)$, we can take a general smooth hypersurface $H\in |m\ls|$ which does not contain any prime divisor of $(s=0)$, and $H\cap V\neq \varnothing$. Define $\sigma:=s\cdot s_H\in H^0\big(Y,f_*(mK_{X/Y}+mD)  \otimes \ls^{-m} \big)$, where $s_H\in H^0(Y,m\ls)$ is the canonical divisor defining $H$. By \cref{non-vanishing}, for the   effective divisor $\Gamma_\sigma\in H^0(X,mK_{X/Y}+mD-mf^*\ls)$ induced by $\sigma$, it has at least one irreducible component $P$ with multiplicity one, so that   
			\begin{align*} 
			P\cap f^{-1}(V)=f^*H_{\upharpoonright f^{-1}(V)}.
			\end{align*}  Since $D$ and $ {\Gamma}_\sigma$ do  not have common prime divisors, $P$ is also an irreducible component of $ {\Gamma}_\sigma+  {D}$ with multiplicity one.
	\item \label{rem:morphism} By the proof of \cref{thm:set up} one can show that we have a canonical morphism 
	$$
\Psi:	\tau_*{g}_*(mK_{{Z}/W}+m {D}_Z)\otimes  \as^{-m}\otimes \oc_Y(-mB)\to f_*(mK_{X/Y}+mD)  \otimes \as^{-m}
	$$
	which does not depend on the choice of the intermediate  birational model   $\tilde{Z}$ of $Z$ in \eqref{dia:inter}. Moreover, the linear map
	$$
\Phi:	H^0\big(Z,mK_{{Z}/W}+m {D}_Z-mg^*\tau^*\as-mg^*\tau^*B\big)\to H^0(X,mK_{X/Y}+mD-mf^*\as)
	$$
induced by $\Psi$	is non-trivial.
	\end{thmlist}
\end{rem}

As we mentioned in the end of \cref{sec:intro}, since we  require the discriminant locus of the the new family obtained by the desingularization of certain cyclic cover to be simple normal crossing, we need some sort of ``base change property" of direct images. The following  proposition   follows the ideas in \cite[Proposition 4.4]{PTW18}, and it can be seen as the log version of \cite[Theorem 1.23]{Den18}.\noindent
\begin{proposition}\label{thm:snc}
For $(X,D)\to Y$ and $V\subset Y$ as in \cref{thm:set up}, let $V_0\subset V$ be any non-empty Zariski open set. Assume that  $s\in H^0\big(Y,f_*(mK_{X/Y}+mD)  \otimes \as^{-m}\big)$ is a non-zero  section induced by $s=\Phi(\sigma)$ with $\sigma \in H^0\big(Z,mK_{{Z}/W}+m {D}_Z-mg^*\tau^*\as-mg^*\tau^*B\big)$ and $\Phi$ defined in \cref{rem:morphism}. Then there exists a birational morphism $\mu:\tilde{Y}\to Y$ from a projective manifold $\tilde{Y}$ and a new birational model $\tilde{f}:(\tilde{X},\tilde{D})\to \tilde{Y}$ where $(\tilde{X},\tilde{D})$ is a smooth klt pair satisfying
\[
\begin{tikzcd}
(\tilde{X },\tilde{D})\arrow[d,"\tilde{f}"]\arrow[r] \arrow[rr, bend left, "\phi"]
&  (X\times_Y\tilde{Y})^{\sim}\arrow[d,"\psi"]\arrow[r]  & (X,D)\arrow[d,"f"] \\
\tilde{Y } \arrow[r,   "="]
& \tilde{Y}  \arrow[r,"\mu"] & Y
\end{tikzcd}
\] 
where $ (X\times_Y\tilde{Y})^{\sim}$  is the main component of the  normalization of $X\times_Y\tilde{Y}$,   $\tilde{X}\to (X\times_Y\tilde{Y})^{\sim}$ is some desingularization, and
\begin{enumerate}[leftmargin=0.7cm]
	\item  \label{iso} $\tilde{V}_0:=\mu^{-1}(V_0)\stackrel{\mu}{\to} V_0$ is an isomorphism.
	\item \label{normal} $T_0:=\tilde{Y}\setminus \tilde{V}_0$ and $\tilde{B}:=\mu^*(B)_{\rm red}$ are both simple normal crossing. 
	\item \label{base change} Set $\tilde{X_0}:=(\mu\circ\tilde{f})^{-1}(V)$, and $\tilde{D}_0:=\tilde{D}\cap \tilde{X}_0$. Then $(\tilde{X}_0,\tilde{D}_0)=(X^\circ,D^\circ)\times_Y\tilde{Y}$.  In particular, $\phi:(\tilde{X },\tilde{D})_{\upharpoonright \tilde{f}^{-1}(\tilde{V}_0)}\to (X,D)_{\upharpoonright   f^{-1}(V_0)}$ is also an isomorphism.
	\item \label{normal2}$\tilde{f}^{*}(T_0)+\tilde{D}$ is normal crossing.
	\item \label{coincide} Let $V'\subset V$ be the big open set so that $\mu:\mu^{-1}(V')\to V'$ is an isomorphism. Then  $V'\supset V_0$, and   there exists a section $\tilde{s}\in H^0\big(\tilde{Y},\tilde{f}_*(mK_{\tilde{X}/\tilde{Y}}+m\tilde{D}+m\tilde{E})  \otimes (\mu^*\as)^{-m}\big)$ which coincides with $s$ when restricted to $\tilde{V}':=\mu^{-1}(V')$. Here $\tilde{E}$ is an effective $\tilde{f}$-exceptional divisor with $\tilde{f}(\tilde{E})\cap \tilde{V}'=\varnothing$. 
	\end{enumerate}
	\end{proposition}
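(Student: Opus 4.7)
The plan follows the strategy of \cite[Proposition 4.4]{PTW18}, adapted to the logarithmic setting, and proceeds in three stages: base modification, resolution of the relative pair, and transport of the section.

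\textbf{Stage 1 (base modification).} Starting from $f: (X,D) \to Y$ and the data of $V_0 \subset V \subset Y$, $B = Y \setminus V$, I combine Hironaka flattening of $f$ with an embedded log resolution of $B \cup \overline{V \setminus V_0}$ in $Y$ to obtain a birational morphism $\mu: \tilde{Y} \to Y$ from a smooth projective $\tilde{Y}$ such that (a) $\tilde{V}_0 := \mu^{-1}(V_0) \to V_0$ is an isomorphism, (b) $T_0 := \tilde{Y} \setminus \tilde{V}_0$ and $\tilde{B} := \mu^*(B)_{\rm red}$ are both simple normal crossing, and (c) the main component $(X \times_Y \tilde{Y})^{\sim}$ of the normalization of $X \times_Y \tilde{Y}$ is flat over $\tilde{Y}$. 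This yields properties (1) and (2). The crucial point is that $\mu$ can be arranged to be an isomorphism over $V_0$ precisely because $f$ is already smooth over $V \supset V_0$. Since both $Y$ and $\tilde{Y}$ are smooth and $\mu$ is birational, the locus in $V$ where $\mu$ fails to be an isomorphism has codimension $\geq 2$, so the big open set $V' \subset V$ of property (5) exists and contains $V_0$.

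\textbf{Stage 2 (family desingularization).} On $(X \times_Y \tilde{Y})^{\sim}$, the preimage of $V$ equals the smooth family $X^\circ \times_V \tilde{V}$ with relatively normal crossing boundary $D^\circ \times_V \tilde{V}$, since $f$ is smooth over $V$. I take a log resolution $\tilde{X} \to (X \times_Y \tilde{Y})^{\sim}$ of the pair $(\psi^*D + \psi^*T_0)_{\rm red}$ that is an isomorphism over $\tilde{f}^{-1}(\tilde{V})$, and define $\tilde{D}$ to be the proper transform of the pulled-back boundary on $\tilde{X}$. After additional blow-ups with centers inside $\tilde{f}^{-1}(T_0)$, one arranges that $\tilde{f}^*(T_0) + \tilde{D}$ is simple normal crossing, giving property (4). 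Property (3) holds automatically, since none of the modifications of this stage touch $\tilde{f}^{-1}(\tilde{V})$.

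\textbf{Stage 3 (transport of the section).} For property (5), I exploit that $s$ arises as $\Phi(\sigma)$ for some section $\sigma$ on the slc family $(Z, D_Z) \to W$ of \cref{mild}, where the Kollár condition guarantees base-change compatibility of $\omega_{Z/W}^{[m]}(mD_Z)$. Taking the base change $\tilde{W} := W \times_Y \tilde{Y} \to \tilde{Y}$ of the slc family, $\sigma$ pulls back to a section on $Z \times_W \tilde{W}$; applying the analogue of the map $\Phi$ of \cref{rem:morphism} on $\tilde{Y}$ produces a non-zero
\[
\tilde{s} \in H^0\!\big(\tilde{Y},\, \tilde{f}_*(mK_{\tilde{X}/\tilde{Y}} + m\tilde{D} + m\tilde{E}) \otimes (\mu^*\as)^{-m}\big),
\]
where $\tilde{E}$ is the effective $\tilde{f}$-exceptional divisor absorbing the discrepancy between $\mu^*(mK_{X/Y} + mD)$ and $mK_{\tilde{X}/\tilde{Y}} + m\tilde{D}$. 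Since $\tilde{E}$ is $\tilde{f}$-exceptional and the resolutions of Stage 2 are isomorphisms over $\tilde{V}'$, one has $\tilde{f}(\tilde{E}) \cap \tilde{V}' = \varnothing$, and $\tilde{s}$ agrees with $\mu^* s$ on $\tilde{V}'$. The main obstacle is exactly this step: the sheaf $f_*(mK_{X/Y}+mD)$ does not itself satisfy base change along the non-flat $\mu$, and $\tilde{E}$ must be arranged to be purely $\tilde{f}$-exceptional with image avoiding $\tilde{V}'$. The detour through $\sigma$ on the slc model, where the Kollár condition provides the required base-change property, is what makes this feasible; this is the logarithmic analogue of \cite[Theorem 1.23]{Den18} and is precisely why \cref{thm:set up} is formulated via the slc reduction rather than directly on $(X,D)$.
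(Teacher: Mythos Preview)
Your proposal is correct and follows the same three-stage strategy as the paper's own proof: birationally modify the base to arrange (1)--(2), resolve the pulled-back family to arrange (3)--(4), and transport the section through the slc model via the Koll\'ar base-change property to obtain (5). The only differences are cosmetic: the paper does not invoke Hironaka flattening (flatness of $(X\times_Y\tilde Y)^{\sim}$ plays no role downstream); the further blow-ups for (4) need centers only in $\tilde f^{-1}(\tilde B)\subset \tilde f^{-1}(T_0)$, which is what makes your claim ``none touch $\tilde f^{-1}(\tilde V)$'' literally true (over $\tilde V$ the divisor $\tilde f^*T_0+\tilde D$ is already normal crossing because the family is smooth there); and in Stage~3 the paper passes to a strong desingularization of $W\times_Y\tilde Y$, constructs $\tilde s$ first over a big open set of $\tilde Y$, and then extends by reflexivity---this extension step is where the $\tilde f$-exceptional correction $\tilde E$ enters in the paper's account.
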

\begin{proof}
	Take a birational morphism $\mu:\tilde{Y}\to Y$  so that \eqref{iso} and \eqref{normal} are satisfied. Let  $\tilde{X}\to (X\times_Y\tilde{Y})^{\sim}$ to be a blowing-up  with center in $\psi^{-1}(\tilde{B})$, so that $\phi^*D$, $\tilde{f}^*\tilde{B}$ and $\tilde{f}^*\tilde{B}+\phi^*D$ are all normal crossing.   Set $\tilde{D}$ to be the strict transform of $D$ under $\phi$. Hence $(\tilde{X},\tilde{D})$ is also a  smooth klt pair, which is smooth over $\tilde{V}:=\mu^{-1}(V)$. One can easily see that    \eqref{base change}  is satisfied automatically, and $\tilde{X}_0\cap \tilde{f}^*T_0$  is normal crossing. One can take further blow-up of $\tilde{X}$ with centers in $\tilde{f}^*\tilde{B}$ so that \eqref{normal2} is achieved. 
	
	Take $\tilde{W}$ to be a strong desingularization of $W\times_Y\tilde{Y}$, where $\tau:W\to Y$ is the flat finite morphism introduced in the proof of \cref{thm:set up}. $\tilde{\tau}:\tilde{W}\to \tilde{Y}$ is a generically finite to one surjective morphism, which is finite over $\tilde{V}'$. Hence we can leave out a codimension at least two closed subvariety on $\tilde{Y}\setminus \tilde{V}'$   so that $\tau$ is finite.   Set $(\tilde{Z}, \tilde{D}_{\tilde{Z}}):=(Z,D_Z)\times_W\tilde{W}\stackrel{\tilde{g}}{\to} \tilde{W}$, which is  also  an slc   family. By the base change property, 
	$$
 \delta^*\sigma\in H^0\big(\tilde{Z},mK_{\tilde{Z}/\tilde{W}}+m \tilde{D}_{\tilde{Z}}-m(\tau\circ\delta\circ \tilde{g})^*\as-m(\tau\circ\delta\circ \tilde{g})^*B\big)
	$$
	where $\delta:\tilde{W}\to W$. Since $\mu^*B\geqslant \tilde{B} $, and $\tau\circ\delta=\mu\circ \tilde{\tau}$, then $\delta^*\sigma$ induces a section
	$$
\tilde{\sigma}\in H^0\big(\tilde{Z},mK_{\tilde{Z}/\tilde{W}}+m \tilde{D}_{\tilde{Z}}-m\tilde{g}^*\tilde{\tau}^*(\mu^*\as)-m\tilde{g}^*\tilde{\tau}^*\tilde{B}\big)
	$$
	which is isomorphic to itself when restricted to  $\tilde{\tau}^{-1}(\tilde{V})$. By the fonctorial property stated in \cref{rem:morphism}, $\tilde{\sigma}$ gives rise to a section $$\tilde{s}\in H^0\big(\tilde{Y},\tilde{f}_*(mK_{\tilde{X}/\tilde{Y}}+m\tilde{D})  \otimes \mu^*\as^{-m}\big),$$
	which is isomorphic to $s$ when restricted over $\tilde{V}'\simeq V'$. Note that $\tilde{s}$ only defined over a big open set of $\tilde{Y}$. It extends to a global section, and the $\tilde{f}$-exceptional divisor $\tilde{E}$ might appear.
\end{proof}
\begin{rem}\label{rem:normal}
	\begin{thmlist}
	\item 
When $V_0\subset Y\setminus (s=0)$,  for the non-zero section  $\tilde{s}\in H^0\big(\tilde{Y},\tilde{f}_*(mK_{\tilde{X}/\tilde{Y}}+m\tilde{D}+m\tilde{E})  \otimes \mu^*\as^{-m}\big)$ defined in \eqref{coincide} of \cref{thm:snc}, $(\tilde{s}=0)\cap \mu^{-1}(V_0)=\varnothing$. Let $\tilde{\Gamma}\in |mK_{\tilde{X}/\tilde{Y}}+m\tilde{D}+m\tilde{E}- m(\mu\circ\tilde{f})^*\as|$  be zero divisor defined by $\tilde{s}$. By \cref{non-vanishing}, $\tilde{\Gamma}\subset  \tilde{f}^{-1}(T_0)$. By \eqref{normal2}, $\tilde{f}^{*}(T_0)+\tilde{D}$ is   normal crossing, and thus $\tilde{\Gamma}+\tilde{D}$ is also  normal crossing.
\item \label{irreducible} By  \cref{multiplicity 1}, we can assume there exists an $s\in H^0\big(Y,f_*(mK_{X/Y}+mD)  \otimes \as^{-m}\big)$  so that its zero divisor  $\Gamma_s\in |mK_{X/Y}+mD-mf^*\as|$ contains an irreducible component $P$ with multiplicity one, and 
	\begin{align*} 
P\cap f^{-1}(V)=f^*H_{\upharpoonright f^{-1}(V)}
\end{align*} 
for some smooth hypersurface $H$ on $Y$ with $H\cap V\neq \varnothing$. Since $V'$ in \eqref{coincide} of \cref{thm:snc} is a big open set of $V$, one thus has $H\cap V'\neq \varnothing$.   By \eqref{coincide} in \cref{thm:snc}, there exists  $\tilde{s}\in H^0\big(\tilde{Y},\tilde{f}_*(mK_{\tilde{X}/\tilde{Y}}+m\tilde{D}+m\tilde{E})  \otimes (\mu^*\as)^{-m}\big)$ which coincides with $s$ when restricted to $\tilde{V}':=\mu^{-1}(V')$. Hence for the induced zero divisor $\tilde{\Gamma}_{\tilde{s}}\in |mK_{\tilde{X}/\tilde{Y}}+m\tilde{D}+m\tilde{E}-m\tilde{f}^*\mu^*\as|$, it also contains at least one irreducible component $\tilde{P}$  with multiplicity one, and $\tilde{P}\cap \tilde{f}^{-1}(\tilde{V}')\neq \varnothing$. Such $\tilde{P}$ is indeed the strict transform of $P$ under the birational morphism $\phi:\tilde{X}\to X$.
\end{thmlist}
	\end{rem}

\section{Construction of the Viehweg-Zuo Higgs bundle}
This section is devoted to the construction of certain negatively curved Higgs bundles on the base. This type of  Higgs bundles,   first introduced by Viehweg-Zuo   \cite{VZ01,VZ02,VZ03} and later developed by Popa-Schnell \cite{PS17}, has proven to be a powerful tool in studying the  hyperbolicity of moduli spaces.  Let us give the  definition in an abstract way, following  \cite[Definition 3.1]{Den18}.
\begin{dfn}[Abstract Viehweg-Zuo Higgs bundles]\label{def:VZ}
	Let $V$ be a quasi-projective manifold, and let $Y\supset V$ be a projective compactification of $V$ with the boundary $B:=Y\setminus V$ simple normal crossing. A Viehweg-Zuo Higgs bundle  over $Y$ (or abusively say over $V$) is a logarithmic  Higgs bundle $(\tilde{\es},\tilde{\theta})$ over $Y$  consisting of the following data:
	\begin{thmlist}
		\item  a divisor $T$ on $Y$ so that $B+T$ is simple normal crossing.
		\item \label{VZ big}  A big and nef line bundle $\bs$ over $Y$ with $\mathbf{B}_+(\bs)\subset B\cup T$. 
		\item   A logarithmic Higgs bundle  $({\es}, {\theta}):=\big(\bigoplus_{q=0}^{n}E^{n-q,q},\bigoplus_{q=0}^{n}\theta_{n-q,q}\big)$ induced by the Deligne extension  of a polarized variation of Hodge structure  defined over $Y\setminus (B\cup T)$ with eigenvalues  of residues  lying in $[0,1)\cap{\qb}$.
		\item A sub-Higgs sheaf $(\fs,\eta)\subset (\tilde{\es},\tilde{\theta})$ 
	\end{thmlist}
	satisfying
	\begin{enumerate}[leftmargin=0.7cm]
		\item $(\tilde{\es},\tilde{\theta}):=(\bs^{-1}\otimes {\es},\mathds{1}\otimes {\theta})$. In  particular, $\tilde{\theta}:\tilde{\es}\to \tilde{\es}\otimes \Omega_{Y}\big(\log (B+T)\big)$, and $\tilde{\theta}\wedge\tilde{\theta}=0$.
		\item $(\fs,\eta)$ has log poles only on the boundary $B$, \emph{i.e.} $\eta:\fs\to \fs\otimes\Omega_{Y}(\log B)$.
		\item Write $\tilde{\es}_k:=\bs^{-1}\otimes E^{n-k,k}$, and denote by $\fs_k:=\tilde{\es}_k\cap \fs$. Then the first stage $\fs_0$ of $\fs$ is an \emph{effective line bundle}. In other words, there exists a non-trivial morphism $\oc_Y\to \fs_0$.
	\end{enumerate}
\end{dfn}
Now we are able to state our main result in this section.
\begin{thm}\label{thm:VZ}
	For the $(X^\circ,D^\circ)\to V$ as in \cref{main}, after replacing $V$ by a birational model, there exists a VZ Higgs bundle over some smooth projective compactification $Y$    of $V$.
	\end{thm}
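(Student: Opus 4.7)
The plan is to build the VZ Higgs bundle from the Hodge decomposition of a variation of Hodge structure on a desingularised cyclic cover of $\tilde{X}$, twisted by the pull-back of the ample line bundle $\as$ provided by \cref{thm:set up}. The overall strategy mirrors the compact-fibre construction of Viehweg-Zuo and Popa-Schnell; the extra inputs specific to the log Calabi-Yau setting are already encoded in \cref{thm:set up} and \cref{thm:snc}.

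First I would combine \cref{thm:set up} with \cref{multiplicity 1} to choose a section $s\in H^0(Y,f_*(mK_{X/Y}+mD)\otimes\as^{-m})$ whose zero divisor $\Gamma_s\in|mK_{X/Y}+mD-mf^*\as|$ contains a distinguished component $P$ of multiplicity one whose restriction to $f^{-1}(V)$ is a pull-back of a smooth hypersurface $H\subset Y$. Next, I apply \cref{thm:snc} with $V_0:=V\setminus(s=0)$, producing a modification $\mu:\tilde{Y}\to Y$, a new smooth klt family $\tilde{f}:(\tilde{X},\tilde{D})\to\tilde{Y}$, and a section $\tilde{s}$ whose zero divisor $\tilde{\Gamma}\in|mK_{\tilde{X}/\tilde{Y}}+m\tilde{D}+m\tilde{E}-m\tilde{f}^*\mu^*\as|$ is supported on $\tilde{f}^{-1}(T_0)$, and by \cref{rem:normal} the divisor $\tilde{\Gamma}+\tilde{D}$ is normal crossing. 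I then replace $V$ by $\tilde{V}_0$ and $Y$ by $\tilde{Y}$, so that Theorem 2.4 is reduced to constructing the VZ Higgs bundle on the new base.

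Setting $\ls:=K_{\tilde{X}/\tilde{Y}}+\tilde{D}+\tilde{E}-\tilde{f}^*\mu^*\as$, the relation $\ls^{\otimes m}\simeq\oc_{\tilde{X}}(\tilde{\Gamma})$ provides the data for Viehweg's standard construction of a cyclic cover $\pi:\tilde{Z}\to\tilde{X}$ of order $m$ along $\tilde{\Gamma}$, with appropriately prescribed ramification along $\tilde{D}$. Taking a strong desingularisation $Z\to\tilde{Z}$ and composing with $\tilde{f}$ gives $h:Z\to\tilde{Y}$, whose non-smooth locus is contained in the simple normal crossing divisor $S:=\tilde{B}\cup T_0$ by \cref{normal} of \cref{thm:snc} and the normal-crossing property of $\tilde{\Gamma}+\tilde{D}$. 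I then form the polarised variation of Hodge structure attached to $R^nh_*\cb$ on $\tilde{Y}\setminus S$ (with $n$ the relative dimension) and take its Deligne canonical extension, producing a logarithmic Higgs bundle $(\es,\theta)=\bigoplus_q E^{n-q,q}$ on $\tilde{Y}$ with log poles on $B+T$, where $T:=S\setminus\tilde{B}$, and with eigenvalues of residues in $[0,1)\cap\qb$. Finally I set $\bs:=\mu^*\as$, which is big and nef because $\as$ is ample, with $\mathbf{B}_+(\bs)$ contained in the $\mu$-exceptional locus and hence in $B\cup T$, and define $(\tilde{\es},\tilde{\theta}):=(\bs^{-1}\otimes\es,\mathds{1}\otimes\theta)$.

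The remaining task is to build the sub-Higgs sheaf $(\fs,\eta)\subset(\tilde{\es},\tilde{\theta})$ with log poles only along $\tilde{B}$ and with $\fs_0$ an effective line bundle. Following the tautological-section viewpoint of Popa-Schnell, the tautological section of the cyclic cover provides the canonical inclusion $\oc_{\tilde{Y}}\hookrightarrow \fs_0$ arising from the $\ls^{-1}$-summand of $\pi_*\oc_{\tilde{Z}}$, which after pushforward to $\tilde{Y}$ and twisting by $\bs^{-1}$ gives an effective line bundle inside $\tilde{\es}_0$. The higher stages $\fs_k\subset\tilde{\es}_k$ are then generated from $\fs_0$ by iterated logarithmic Kodaira-Spencer maps of the smooth family $(\tilde{X},\tilde{D})\to\tilde{Y}$. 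The hard part will be this last step: one must check simultaneously that $\fs$ is preserved by $\tilde{\theta}$ (so that it is genuinely a sub-Higgs sheaf) and that $\eta$ only acquires logarithmic poles along $\tilde{B}$ rather than along all of $S$. The first point is the Hodge-theoretic comparison originally due to Viehweg-Zuo, identifying the Kodaira-Spencer cup product on $(\tilde{X},\tilde{D})/\tilde{Y}$ with the Hodge-theoretic Higgs field of the VHS on $Z/\tilde{Y}$; the second is precisely what \cref{normal2} of \cref{thm:snc} was engineered to guarantee, since the normal-crossing condition on $\tilde{f}^*T_0+\tilde{D}$ allows the Kodaira-Spencer cocycle of the smooth family to be represented with log poles only along $\tilde{B}$.
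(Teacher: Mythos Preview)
Your outline follows the Viehweg--Zuo/Popa--Schnell template, but two concrete steps do not go through as written.

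First, $\ls:=K_{\tilde X/\tilde Y}+\tilde D+\tilde E-\tilde f^*\mu^*\as$ is not a line bundle, because $\tilde D$ has fractional coefficients; the cyclic-cover construction needs an integral $\ls$. The paper instead sets $\ls:=K_{X/Y}+\lceil D\rceil-f^*\as$ and takes the $m$-th root cover along $H:=\Gamma+m\lceil D\rceil-mD\in|m\ls|$, not along $\Gamma$. This is not cosmetic: the extra horizontal branching along $m(\lceil D\rceil-D)$ is exactly what forces the relative log sheaves $\Omega^p_{X/Y}\big(\log(\Delta+\lceil D\rceil)\big)$ into the picture, and the determinant computation
\[
\Omega^n_{X/Y}\big(\log(\Delta+\lceil D\rceil)\big)\otimes\ls^{-1}\;=\;\oc_X(\Delta_{\rm red}-\Delta)\otimes f^*\as
\]
is what yields the inclusion $\bs:=\as-B\hookrightarrow F^{n,0}$ and hence the effective first stage $\fs_0$. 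Your hedge ``appropriately prescribed ramification along $\tilde D$'' names the issue without resolving it, and your choice $\bs=\mu^*\as$ is not the line bundle this computation produces.

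Second, you invoke \cref{thm:snc} with $V_0=V\setminus(s=0)$ \emph{before} constructing any cover, and then assert that the desingularised cover $h:Z\to\tilde Y$ is smooth over $\tilde V_0$. Once the branch divisor is corrected to $H$, it is nonzero over $\tilde V_0$ (the horizontal piece $m\lceil\tilde D\rceil-m\tilde D$ survives), and normal-crossing of $\tilde\Gamma+\tilde D$ on the total space does not guarantee that a globally chosen strong resolution of the normalised cover is fibrewise smooth there. The paper therefore reverses your order: it builds the cyclic cover and a resolution $Z$ on the \emph{original} model, empirically locates the open set $V_0\subset V$ over which $g:Z\to Y$ is smooth, and only then applies \cref{thm:snc} with that $V_0$, using \eqref{coincide} to match the two covers over $V_0$. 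Finally, your last paragraph suppresses the mechanism that makes the sub-Higgs comparison work: the paper builds an explicit morphism $\Xi:\psi^*\big(\Omega^\bullet_X(\log(\Delta+\lceil D\rceil))\otimes\ls^{-1}\big)\to\Omega^\bullet_Z(\log\Pi)$, introduces an intermediate Higgs sheaf $(\tilde F^{p,q},\varphi)$ carrying a $\zbb/m\zbb$-action, and recovers $(F^{p,q},\tau)$ as its invariant direct summand mapping to the Hodge bundle; this is how the sub-Higgs property and the restriction of log poles to $B$ are obtained simultaneously, not by an abstract Kodaira--Spencer compatibility.
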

Let us  first show how the above theorem implies \cref{main}.
\begin{proof}[Proof of \cref{main}]
 Once the VZ Higgs bundle is constructed, the proof of \cref{Viehweg} should be well-known to the experts, and we briefly recall the proof for completeness sake.   The first step   is the construction of Viehweg-Zuo (big) sheaf, which is due to Viehweg-Zuo in \cite{VZ02}.  Since $\big(\bigoplus_{q=0}^{n}\fs_q,\bigoplus_{q=0}^{n}\eta_{q}\big)$ is a \emph{sub-Higgs sheaf} of $\big(\tilde{\es},\tilde{\theta}\big)$, as initialed in \cite{VZ01}, for any $q=1,\ldots,n$, the morphism $\eta^q:\fs\to \fs\otimes  {\bigotimes}^q \Omega_Y(\log B)$ defined by iterating $\eta:\fs\to \fs\otimes \Omega_Y(\log B)$ $q$-times induces  
\begin{align}\label{eq:define iterate2}
\fs_0 \to  \fs_q\otimes {\bigotimes}^q \Omega_Y(\log B).
\end{align}
\eqref{eq:define iterate2} 
factors through $\fs_q\otimes S^q\Omega_Y(\log B)$ by $\eta\wedge\eta=0$. Recall that $\fs_0$ is effective, one thus has a morphism 
\begin{align}\label{eq:iterate}
\oc_Y\to \fs_0\to \fs_q\otimes S^q\Omega_Y(\log B).
\end{align} 
We denote by $N_q$ and $K_q$ the kernels  of $ \eta_q:\fs_q\to \fs_{q+1}\otimes \Omega_Y(\log B)$  and $ \theta_{n-q,q}:E_{n-q,q}\to E_{n-q+1,q+1}\otimes \Omega_Y\big(\log (B+T)\big)$ respectively, which are both   torsion free sheaves.    Then $N_q=(\bs^{-1}\otimes K_q)\cap \fs_q$. By the work of  Zuo \cite{Zuo00} (see also \cite{PW16,Bru17,FF17,Bru16} for various generalizations) on the negativity of kernels of Kodaira-Spencer maps of Hodge bundles, $K^*_q$ is \emph{weakly positive} in the sense of Viehweg\footnote{
By \cite{Bru17}, one can even prove that the Hodge metric   induces a semi-negatively curved singular hermitian metric  for $K_q$ in the sense of Raufi \cite{Rau15} (cf. also \cite{PT14,HPS16}). 
}, cf.  \cite[Lemma 4.4.(\lowerromannumeral{5})]{VZ02}. Hence there exists a morphism
$$
\bs\otimes K_q^*\to   N_q^*
$$ 
which is generically surjective. Let $k\in \mathbb{Z}_{\geqslant 0}$ the minimal non-negative integer so that $\eta^k\neq 0$ and $\eta^{k+1}=0$. As proved in \cite[Corollary 4.5]{VZ02}, $k$ must be positive. Indeed, if this is not the case, one has $\oc_Y\subset K_0\otimes \bs^{-1}$, which is not possible. Hence there exists a non-trivial morphism
$$
\oc_Y\to \fs_0\to N_k\otimes S^k\Omega_Y(\log B).
$$
In other words, there exists a non-trivial morphism
\begin{align}\label{eq:big}
\bs\otimes K_k^*\to N_k^*\to  S^k\Omega_Y(\log B).
\end{align}
Since $\bs$ is big and nef, $\bs\otimes K_k^*$ is big in the sense of Viehweg \cite[Definition 1.1.(c)]{VZ02}, and thus for any ample line bundle $\as$ there exists $\alpha\gg 0$ so that 
 $
S^{\alpha}(\bs\otimes K_k^*)\otimes \as^{-1}
$ 
is generated by global sections over a Zariski open set. By \eqref{eq:big} there is a non-zero morphism 
 $$\as\to S^{\alpha k}\Omega_Y(\log B).$$
Such   $\as$ is called the   Viehweg-Zuo  big sheaf in literatures. 

Once the Viehweg-Zuo sheaf $\as$ is constructed, it follows from \cite[Theorem 4.1]{CP15b} that $K_{Y}+B$ is big. \cref{Viehweg} is thus proved. 

 The proof of \cref{pseudo} is exactly the same as \cite[Proof of Theorem B]{Den18b}. In \cite[\S 3]{Den18} we establish an algorithm to  construct a  Finsler metric whose holomorphic sectional curvature is bounded above by a negative constant via VZ Higgs bundles. By proving some sort of \emph{infinitesimal generic Torelli theorem} (cf. \cite[Theorem C]{Den18b}) for VZ Higgs bundles, in \cite[Theorem B]{Den18b} we show that this Finsler metric is  \emph{generically non-degenerate}. The pseudo Kobayashi hyperbolicity of $V$, which is indeed a bimeromorphic property,  follows from the Ahlfors-Schwarz lemma (cf. \cite{Dem97}) immediately. 
\end{proof}
\begin{rem}
 Let us mention that although the Lang conjecture \cite[Chapter \rom{8}. Conjecture 1.4]{Lan91} on the equivalence between pseudo Kobayashi hyperbolicity and being of log general type is quite open at the present time, we know that it holds for   Hilbert modular varieties by \cite{Rou16,CRT17} and subvarities of Abelian varieties \cite{Yam18}. 
	\end{rem}
 We show how \cref{main} implies \cref{cor:Lang}, whose proof is quite standard. 
\begin{proof}[Proof of \cref{cor:Lang}]
We first prove (\lowerromannumeral{1}). Pick any positively dimensional irreducible subvariety $Z\subset V$, and take a   desingularization $\tilde{Z}\to Z$.  Define a base change $f_{\tilde{Z}}:(X^\circ,D^\circ)\times_{ V}\tilde{Z}\to \tilde{Z}$, which is also a smooth family of Calabi-Yau klt pairs. Since the logarithmic Kodaira-Spencer map is fonctorial under the base change, by the effective parametrization assumption,   the logarithmic  Kodaira-Spencer map of $f_{\tilde{Z}}$ is generically injective. Hence $\tilde{Z}$ is of log general type by \cref{Viehweg}, and  (\lowerromannumeral{1}) is proved.

We  will prove (\lowerromannumeral{2}) by contradiction. Suppose that there exists a non-constant holomorphic map $\gamma:\cb\to V$. Set $Z$ to be the Zariski closure of the image $\gamma$, and take a   desingularization $\tilde{Z}\to Z$, which is positively  dimensional, smooth and irreducible. Then $\gamma:\cb\to Z$ lifts to  a Zariski dense entire curve $\tilde{\gamma}:\cb\to \tilde{Z}$. By the above arguments,    $\tilde{Z}$  can be realized as the base of a maximally variational smooth family of Calabi-Yau klt pairs. This is a contradiction, since by \cref{pseudo}, $\tilde{Z}$ cannot have any Zariski dense entire curve.
\end{proof}
Now let us construct the VZ Higgs bundles on some birational modification of the base $V$ in \cref{main}.
\begin{proof}[Proof of \cref{thm:VZ}]
  For the effective divisor $\Gamma\in |mK_{X/Y}+mD-mf^*\as|$   defined in \cref{cyclic divisor}, set $H:=\Gamma+m\dl -mD\in |mK_{X/Y}+m\dl-mf^*\as|$.  By   \cref{multiplicity 1}, we can assume that $H$ contains an irreducible component $P$ with multiplicity one. 
  Let $Z_{\rm cyc}$ be the cyclic cover of $X$ obtained by taking the $m$-th roots along $H:=\Gamma+m\dl -mD\in |mK_{X/Y}+m\dl-mf^*\as|$, and let $Z_{\rm nor}$ be the normalization of $Z_{\rm cyc}$, which is \emph{irreducible} by \cite[Lemma 3.15.(a)]{EV92}. Take  a  desingularization   $Z\to Z_{\rm nor}$, so that the inverse image of $\psi^*H$ is normal crossing,   where $\psi:Z\to X$ is the composition map.  Then $Z$ is \emph{connected}. 
 By \cref{non-vanishing} there exists a  Zariski open set $V_1\subset V$ so that    $\Gamma\cap f^{-1}(V_1)=\varnothing$. Since $\dl$ is relatively normal crossing over $V$, then   $H$ is relatively normal crossing over $V_1$.  Take a smaller Zariski open set $V_0\subset V_1$ so that the composition morphism $g:Z\to Y$ is smooth over $V_0$.

We apply \cref{thm:snc} to find the new birational model $\tilde{f}:(\tilde{X},\tilde{D})\to \tilde{Y}$ with respect to $V_0\subset V$ and a section $\tilde{s}\in H^0\big(\tilde{Y},\tilde{f}_*(mK_{\tilde{X}/\tilde{Y}}+m\tilde{D}+m\tilde{E})  \otimes (\mu^*\as)^{-m}\big)$ satisfying all the properties in \cref{thm:snc}. For zero divisor 
$\tilde{\Gamma}_{\tilde{s}}\in | mK_{\tilde{X}/\tilde{Y}}+m\tilde{D}+m\tilde{E}-m(\mu\circ\tilde{f})^*\as|$ defined by $\tilde{s}$, set $\tilde{H}:=\tilde{\Gamma}_{\tilde{s}}+m\dlt -m\tilde{D}\in |mK_{\tilde{X}/\tilde{Y}}+m\dlt+m\tilde{E}-m(\mu\circ\tilde{f})^*\as|$. By \cref{rem:normal}, $\tilde{H}$ is normal crossing and contains a prime divisor $\tilde{P}$ with multiplicity one. It follows from \eqref{base change} and \eqref{coincide} in \cref{thm:snc} that $\tilde{H}_{\upharpoonright (\mu\circ\tilde{f})^{-1}(V_0)}\simeq H_{\upharpoonright f^{-1}(V_0)}$.   Let $\tilde{Z}_{\rm cyc}$ be the cyclic cover of $\tilde{X}$ obtained by taking the $m$-th roots along $\tilde{H}$, and let $\tilde{Z}_{\rm nor}$ be the normalization of $\tilde{Z}_{\rm cyc}$, which is also irreducible. Then $\tilde{Z}_{\rm nor}\to \tilde{Y}$ and $Z_{\rm nor}\to Y$ is isomorphic when restricted over $\mu^{-1}(V_0)\simeq V_0$. Hence there exists a birational map $Z\dashrightarrow \tilde{Z}_{\rm nor}$, which is regular over $g^{-1}(V_0)$. Take a strong resolution of the indeterminacy $ {\phi} :\tilde{Z}\to \tilde{Z}_{\rm nor}$  of the above birational map
\[\begin{tikzcd}
\tilde{Z} \arrow[r,"\phi"] \arrow[d,"\tilde{\phi}"]\arrow[dd,bend right,"\tilde{g}" ']
& Z \arrow[d] \arrow[dl,dashed]\arrow[dd,bend left,"g"]\\
\tilde{Z}_{\rm nor} \arrow[d,"\eta"]& Z_{\rm nor}\arrow[d]\\
\tilde{Y}\arrow[r,"\mu"] & Y
\end{tikzcd}
\]
so that  $\phi:\phi^{-1}\big(g^{-1}(V_0)\big)\to g^{-1}(V_0)$ is an isomorphism. By the Zariski's  Main theorem,  $\tilde{g}:\tilde{Z}\to \tilde{Y}$ and $g:Z\to Y$ is isomorphic when restricted to $\mu^{-1}(V_0)\simeq V_0$. In particular, $\tilde{g}:\tilde{Z}\to \tilde{Y}$  is smooth over $\mu^{-1}(V_0)$, whose complement is a simple normal crossing divisor in $\tilde{Y}$.
\medskip

To lighten the notation, we replace $(X,D)\to Y$ by the   birational model $(\tilde{X},\tilde{D})\to \tilde{Y}$ and keep the same notation. In summary, it follows from the properties in \cref{thm:snc} that we have the following data: 
\begin{enumerate}[leftmargin=0.7cm,label=(\alph*)]
	\item a surjective morphism $(X,D)\to Y$ which is smooth over $V$, where $(X,D)$ is a smooth klt pair. 
	\item $D+f^*B$ is normal crossing, where $B:=Y\setminus V$.
	\item a normal crossing divisor $H:=\Gamma+m\dl -mD\in |mK_{X/Y}+m\dl-mf^*\as+mE|$ which is relatively   normal crossing over a Zariski open set $V_0\subset V$. Here  $\as$ and $\bs:=\as-B$ are both   big and nef so that $\big(\mathbf{B}_+(\as)\cup \mathbf{B}_+(\bs)\big)\cap V_0=\varnothing$; $E$ is an  $f$-exceptional divisor with $f(E)\cap V_0=\varnothing$. 
	\item There exists a simple normal crossing divisor $T$ so that  $V_0=Y\setminus (B\cup T)$ and $B+T$ is normal crossing. Moreover, $f^*(B+T)+D$ is normal crossing.
	\item For some desingularization $Z$ of the normalization of the cyclic cover of $X$ by taking the $m$-th roots along $H$, $g:Z\to Y$ is smooth over $V_0$, where $g$ is the composition map. Moreover, $Z$ is connected.
	\item  Denote by $\Pi:=g^*(B+T)$ and $H':=\psi^*H$, where $\psi:Z\to X$ is the composition map. Then $\Pi+H'$ is normal crossing.
\end{enumerate}

 Write $D_{\rm red}:=\sum_{i=1}^{r}D_i$, $\Delta:=f^*B$ and $\Sigma:=f^*T$. Leaving out a codimension at least two subscheme, one can assume that 
\begin{enumerate}[leftmargin=0.7cm]
	\item bot $T$ and $B+T$ are smooth;  
	\item both $f:X\to Y$ and $g:Z\to Y$ are flat; in particular, the $f$-exceptional divisor $E$ disappears;
	\item  $\Delta$  (resp. $\Pi$) is  relatively normal crossing over $B$ (resp. $B+T$); 
		\item for any $I=\{i_1,\ldots,i_\ell\}\subset \{1,\ldots,r\}$, the surjective morphism $D_I\to Y$ is also flat, and $D_I\cap \Delta$  is relatively normal crossing over $B$. Here we denote $D_I:=D_{i_1}\cap D_{i_2}\cap \ldots\cap D_{i_\ell}$.
\end{enumerate}
\begin{claim}[good partial compactification]\label{claim:locally free}
	Write $\Omega_{X/Y}\big(\log (\Delta+\dl)\big):= \Omega_X\big(\log (\Delta+\dl)\big)/f^*\Omega_Y(\log B)$, $  \Omega_{Z/Y}(\log \Pi):=\Omega_Z(\log \Pi)/g^*\Omega_Y\big(\log (B+T)\big)$. Then they are all locally free. In other words,  one has the following short exact sequences of locally free sheaves
\begin{align}\label{short1}
0\to f^*\Omega_Y(\log B)\to \Omega_X\big(\log (\Delta+\dl)\big)\to \Omega_{X/Y}\big(\log (\Delta+\dl)\big)\to 0 
\\\label{short4}
0\to g^*\Omega_Y\big(\log (B+T)\big)\to \Omega_Z(\log \Pi)\to \Omega_{Z/Y}(\log \Pi)\to 0.
\end{align} 
	\end{claim}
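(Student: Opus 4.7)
The plan is to verify both short exact sequences and the asserted local freeness by checking that the morphisms $f\colon (X,\Delta+\dl)\to (Y,B)$ and $g\colon (Z,\Pi)\to (Y,B+T)$ are log smooth morphisms of log varieties. Once log smoothness is established at each point, both the short exact sequences of log differentials and local freeness of the relative log cotangent sheaves follow from the standard theory (cf.\ Kato's criterion for log smoothness, or the elementary version of Esnault--Viehweg for strict normal crossing divisors on smooth varieties). Both claims are local on the source, and the surjection onto the quotient is tautological; so I only need to check that the left arrow is injective with locally free cokernel of the expected rank $\dim X-\dim Y$ (resp.\ $\dim Z-\dim Y$) at each closed point.

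First sequence. Fix $x\in X$ and $y=f(x)\in Y$. When $y\in V$ the divisor $B$ is absent near $y$, $\dl=D_{\rm red}$, and $D_{\rm red}$ is relatively normal crossing over $V$ near $x$ because $(X^\circ,D^\circ)\to V$ is a smooth family of log pairs; the standard relative log cotangent sequence for a smooth morphism with relative normal crossing boundary then applies verbatim. When $y\in B$, I use the reductions collected just before the claim: choose local parameters $t_1,\ldots,t_{\dim Y}$ at $y$ with $B=(t_1\cdots t_b=0)$, and local parameters $x_1,\ldots,x_n$ at $x$ with $\Delta+D_{\rm red}$ contained in $\{x_1\cdots x_n=0\}$ (possible since $\Delta+D_{\rm red}$ is normal crossing by (b)). The hypothesis that $\Delta$ is relatively normal crossing over $B$ and the flatness of every stratum $D_I\to Y$ together guarantee that one may arrange
$$f^*t_i = u_i\cdot x_{\sigma(i,1)}^{e_{i,1}}\cdots x_{\sigma(i,s_i)}^{e_{i,s_i}}\qquad (1\le i\le b),$$
with units $u_i$ and pairwise disjoint index sets $\{\sigma(i,j)\}$ that are moreover disjoint from the indices of the components of $D_{\rm red}$ through $x$. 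In such coordinates one reads off directly that the pulled-back generators $d\log t_i=f^*(dt_i/t_i)$ for $i\le b$ (resp.\ $d(f^*t_i)$ for $i>b$) form part of a local frame of $\Omega_X(\log(\Delta+\dl))$ whose complement freely generates $\Omega_{X/Y}(\log(\Delta+\dl))$; hence the sequence is exact with locally free quotient.

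Second sequence. The argument for $g\colon (Z,\Pi)\to (Y,B+T)$ is formally identical, the target log structure now incorporating $T$ in addition to $B$. The role previously played by the ``$\Delta$ relatively normal crossing over $B$'' hypothesis is now played by the hypothesis that $\Pi=g^*(B+T)$ is relatively normal crossing over $B+T$ on the big open set we have shrunk to; the fact that $(X,D)$-type constraints on a boundary $D$ inside $Z$ do not appear is what makes this case slightly cleaner than the first.

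The only conceptual step that is not purely formal is the verification that the ``relatively normal crossing'' properties actually hold after the codimension-$\ge 2$ excisions, and this is exactly what is arranged in the preparatory paragraph preceding the claim (items (iii) and (iv) there). Beyond that the proof is a routine normal-form calculation in local coordinates, so I do not anticipate a genuine obstacle; the main place where one must be careful is to ensure simultaneously that $\Delta$-components and $D_{\rm red}$-components can be separated among the local coordinates on $X$, which is guaranteed by the fact that $\Delta+D$ is globally normal crossing.
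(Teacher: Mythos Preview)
Your approach is correct and essentially the same as the paper's: both are local coordinate verifications that the map $f^*\Omega_Y(\log B)\to \Omega_X(\log(\Delta+\lceil D\rceil))$ is a bundle injection, relying on the preparatory items (1)--(4) established just before the claim. The paper phrases this as ``$f^*\sigma(x_0)\neq 0$ for every nonvanishing $\sigma$'' and reduces to the stratum $D_I$ through $x_0$, while you phrase it as exhibiting a log-smooth normal form; these are two sides of the same coin.

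One small point worth tightening: in your argument, the step ``one reads off directly that \ldots $d(f^*t_i)$ for $i>b$ form part of a local frame'' is precisely where the full force of item (iv) is needed, not just the flatness of $D_I\to Y$ but the stronger statement that $D_I\cap\Delta$ is relatively normal crossing over $B$. Concretely, at $x\in D_I$ the elements $d(f^*t_i)(x)$ for $i>b$ lie in the span of $dx_k$ with $k>p$, and their linear independence is equivalent to the restriction of $f$ to the deepest stratum of $\Delta\cap D_I$ through $x$ submersing onto the stratum of $B$ through $y$. This is exactly what the paper isolates when it writes $f^*\sigma(x_0)=f_I^*\sigma(x_0)$ and invokes the good-partial-compactification property of $f_I:(D_I,D_I\cap\Delta)\to (Y,B)$. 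You reference items (iii)--(iv) at the end, so the ingredient is there; just make explicit that ``flatness of $D_I$'' alone is not what does the work for the $i>b$ generators.
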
 
\begin{proof}[Proof of \cref{claim:locally free}]
	As is well-know,  the morphism $h:(Z,\Pi)\to (Y,B+T)$ is a  ``good partial compactification" of the smooth morphism $Z\setminus \Pi\to Y\setminus (B\cup T)$ in the sense of \cite[Definition 2.1.(c)]{VZ02}, and one can easily show that   $\Omega_{Z/Y}(\log \Pi)$ is locally free.
	
To prove the local freeness of $\Omega_{X/Y}\big(\log (\Delta+\dl)\big)$, it suffices to show that for any $y_0\in Y$ and $\sigma\in \Omega_Y(\log B)(U)$, where $U\ni y_0$ is some open set with $\sigma(y_0)\neq 0$, $f^*\sigma(x_0)\neq 0$ for any $x_0\in f^{-1}(y_0)$.

Note that there is a unique $I \subset \{1,\ldots,r\}$  so that $x_0\in D_I$ and $x_0\notin D_J$ for any other $J\supsetneq I$.  After reordering, we can assume that $I=\{\ell+1,\ldots,p\}$. Take local coordinates $(x_1,\ldots,x_m)$ around $x$ so that   and  $\Delta_{\rm red}=(x_1\cdots x_\ell=0)$ and $D_i=(x_{i}=0)$ for $i\in I$.  Since $\Delta=f^*B$, the morphism 
$f^*\Omega_Y(\log B)\to \Omega_X\big(\log (\Delta+\dl)\big)$ factors through $\Omega_X(\log \Delta)$. Note that $d\log x_1,\ldots,d\log x_{p},dx_{p+1},\ldots,dx_m$ and $d\log  x_1,\ldots,d\log x_{\ell},dx_{\ell+1},\ldots,dx_m$ form the  local basis for  $\Omega_X\big(\log (\Delta+\dl)\big)$ and $\Omega_X(\log \Delta)$. Hence 
$f^*\sigma=\sum_{j=1}^{\ell} a_j(x)d\log x_j+ \sum_{i=\ell+1}^{m} a_i(x)d  x_i$, where $a_i(x)$ are local holomorphic functions. When we write $f^*\sigma$ in terms of the basis of $\Omega_X\big(\log (\Delta+\dl)\big)$, one has
$$f^*\sigma=\sum_{j=1}^{\ell} a_j(x)d\log x_j+ \sum_{i=\ell+1}^{p} a_i(x)x_id\log  x_i+\sum_{k=p+1}^{m}a_k(x)d x_k.$$
Since $x_0\in D_I=(x_{\ell+1}=x_{\ell+2}=\cdots= x_{p}=0)$, one has
\begin{align}\label{pull-back}
f^*\sigma(x_0)=\sum_{j=1}^{\ell} a_j(x)d\log x_j +\sum_{k=p+1}^{m}a_k(x)d x_k.
\end{align}
When $f^*\sigma$ is seen as the local section in $\Omega_X\big(\log (\Delta+\dl)\big)$, $f^*\sigma(x_0)=0$ if and only if $a_1(x_0)=\cdots=a_{\ell}(x_0)=a_{p+1}(x_0)=\cdots=a_{m}(x_0)=0$. 
On the other hand, since the projective morphism  $f_I:(D_I,D_I\cap \Delta)\to (Y,B)$ satisfies that $f_I$ is flat, $D_I\cap \Delta\to B$ is relatively normal crossing, we thus conclude that $f_I^*\sigma(x_0)\neq 0$.  Since $d\log  x_1,\ldots,d\log x_{\ell},dx_{p+1},\ldots,dx_m$ form the  local basis for  $\Omega_{D_I} (\log D_I\cap \Delta )$,  by \eqref{pull-back}, $f^*\sigma(x_0)=f_I^*\sigma(x_0)\neq 0$, and we conclude that $\Omega_{X/Y}\big(\log (\Delta+\dl)\big)$ is also locally free. 
\end{proof}
By the above claim, such a partial compactification of the smooth family of log pairs $f^{\circ}:(X^\circ,D^\circ)\to V$  can thus be seen as the ``good partial compactification" in the log setting.
\medskip

For any $p\in \mathbb{Z}_{>0}$, let 
\begin{align}\label{eq:Koszul1}
\Omega^{p}_X\big(\log (\Delta+\dl)\big)\otimes \ls^{-1}=\fs^0\supset \fs^1\supset \cdots\supset \fs^p\supset  \fs^{p+1}=0
\end{align}
be the  \emph{Koszul filtration} associated to   \eqref{short1}  twisted with $\ls^{-1}:=-K_{X/Y}-\dl+f^*\as$, defined by 
\begin{align}\label{filtration1}
\fs^i:=\Im \Big(f^*\Omega^i_Y(\log B)\otimes \Omega^{p-i}_X\big(\log (\Delta+\dl)\big)\otimes \ls^{-1}\to \Omega^{p}_X\big(\log (\Delta+\dl)\big) \otimes \ls^{-1}\Big)
\end{align} 
so that the associated graded objects are given by 
\begin{align*}
\grf^i\fs^{\bullet}:=\fs^i/\fs^{i+1}=f^*\Omega^i_Y(\log B)\otimes \Omega^{p-i}_{X/Y}\big(\log (\Delta+\dl)\big)\otimes \ls^{-1}
\end{align*} 
The  \emph{tautological exact sequence} associated to \eqref{eq:Koszul1} is defined by
\begin{equation}\label{eq:tauto1}
\begin{tikzcd}  [column sep=-2em]
&[-1em]  f^*\Omega_Y(\log B)\otimes \Omega^{p-1}_{X/Y}\big(\log (\Delta+\dl)\big)\otimes \ls^{-1} &    & \Omega^{p}_{X/Y}\big(\log (\Delta+\dl)\big)\otimes \ls^{-1}  &\\
0\arrow[r] &  \grf^1\fs^{\bullet}\arrow[u,equal] \arrow[r]   &  \fs^0/\fs^2\arrow[r] &    \grf^0\fs^{\bullet}\arrow[r]   \arrow[u,equal] &  0 
\end{tikzcd}
\end{equation}
By taking higher direct images $\mathbf{R}f_*$ of \eqref{eq:tauto1},
the connecting morphisms of the associated long exact sequences induce $ 
 {F}^{p,q}   \xrightarrow{\tau_{p,q}}  {F}^{p-1,q+1}\otimes \Omega_Y(\log B)   $, 
where    we denote
$$
F^{p,q}:=R^qf_*\Big(\Omega_{X/Y}^p \big(\log (\Delta+\dl)\big)\otimes \ls^{-1} \Big)\Big/{\rm torsion}. $$

Recall that $\psi:Z\to X$ denotes to be the composition map, and $H':=\psi^*H$ is normal crossing. As introduced by Popa-Schnell \cite{PS17} and developed in \cite{WW19}, the tautological section of $\ls$ induces a morphism $
 \psi^*\ls^{-1}\to \oc_Z(-H'_{\rm red})$. 
Since $\psi^*(\Delta+\dl)_{\rm red}\subset (\Pi+H')_{\rm red}$,   for every $i\in\zbb_{>0}$ one has a morphism
 \begin{align*} 
  \psi^* \Omega^i_{X} \big(\log (\Delta+\dl)\big)   \to    \Omega^i_Z \big(\log (\Pi+H' )\big).
\end{align*}  
 Hence there exists a natural morphism
\begin{align}\label{eq:embedd}
\Xi:\psi^*\Big(\Omega^i_{X} \big(\log (\Delta+\dl)\big)\otimes \ls^{-1}\Big)  \to    \Omega^i_Z \big(\log (\Pi+H' )\big)\otimes  \oc_Z(-H'_{\rm red})\subset  \Omega^i_Z (\log \Pi),
\end{align}  
as similarly shown in \cite{PS17,Wei17,Taj18,WW19}.

Pulling back \eqref{short1} by $\psi^*$, we have a short exact sequence of locally free sheaves 
\begin{equation}\label{short5} 
0\to g^*\Omega_Y(\log B)\to \psi^*\Omega_X\big(\log (\Delta +\dl)\big)\to \psi^*\Omega_{X/Y}\big(\log (\Delta +\dl)\big)\to 0  
\end{equation}
In a similar way as \eqref{eq:Koszul1}, we associate  \eqref{short4} and $\eqref{short5}$ with two filtrations 
\begin{align}\label{eq:Koszul2}
&\Omega^{p}_Z(\log \Pi)=\gs^0\supset \gs^1\supset \cdots\supset \gs^p\supset  \gs^{p+1}=0\\\label{eq:Koszul3}
&\psi^*\Big(\Omega^p_{X} \big(\log (\Delta+\dl)\big)\otimes \ls^{-1}\Big) =\tilde{\fs}^0\supset \tilde{\fs}^1\supset \cdots\supset \tilde{\fs}^p\supset  \tilde{\fs}^{p+1}=0
\end{align}
defined by 
\begin{align*}
&\gs^i:=\Im \Big(g^*\Omega^i_Y\big(\log (B+T)\big)\otimes \Omega^{p-i}_Z(\log \Pi)\to \Omega^{p}_Z(\log \Pi) \Big),\\
&\tilde{\fs}^i:=\Im \bigg(g^*\Omega^i_Y(\log B)\otimes \psi^*\Big(\Omega^{p-i}_{X} \big(\log (\Delta +\dl)\big)\otimes \ls^{-1}\Big)\to \psi^*\Big(\Omega^p_{X} \big(\log (\Delta +\dl)\big)\otimes \ls^{-1}\Big) \bigg).
\end{align*}
Their associated graded objects are thus given by
\begin{align}\nonumber &\grf^i\gs^{\bullet}:=\gs^i/\gs^{i+1}=g^*\Omega^i_Y\big(\log (B+T)\big)\otimes \Omega^{p-i}_{Z/Y}(\log \Pi)\\\label{grade}
&\grf^i\tilde{\fs}^{\bullet}:=\tilde{\fs}^i/\tilde{\fs}^{i+1}=g^*\Omega^i_Y(\log B)\otimes \psi^*\Big(\Omega^{p-i}_{X/Y} \big(\log (\Delta +\dl)\big)\otimes \ls^{-1}\Big)=\psi^*\grf^i\fs^{\bullet}.
\end{align} 
One can easily show that  $\Xi$ defined in \eqref{eq:embedd} is compatible with the filtration structures $\Xi:\tilde{\fs}^\bullet\to \gs^\bullet$ in \eqref{eq:Koszul2} and \eqref{eq:Koszul3}.   It thus  induces a morphism   between their graded terms $\grf^i\tilde{\fs}^{\bullet}\to \grf^i\gs^{\bullet}$, 
and in particular,  a morphism   between the following short exact sequences
\begin{equation}\label{eq:tauto2}
\begin{tikzcd}  [column sep=tiny]
&[-2em] g^*\Omega_Y(\log B)\otimes \psi^*\Big(\Omega^{p-1}_{X/Y}\big(\log (\Delta+\dl)\big)\otimes \ls^{-1}\Big) &[-4em]        &[-1em]     \psi^*\Big(\Omega^{p}_{X/Y}\big(\log (\Delta+\dl)\big)\otimes \ls^{-1}\Big)  &\\
0\arrow[r] &[-4em]\grf^1\tilde{\fs}^{\bullet}\arrow[r]\arrow[d]\arrow[u,equal]&[-7em] \tilde{\fs}^0/\tilde{\fs}^2\arrow[r]\arrow[d]&[-1em] \grf^0\tilde{\fs}^{\bullet} \arrow[r]\arrow[d]\arrow[u,equal]& 0\\ 
0\arrow[r] &[-6.5em] \grf^1\gs^{\bullet}\arrow[d,equal]\arrow[r] & [-7em]\gs^0/\gs^2\arrow[r] &  [-1em] \grf^0\gs^{\bullet}\arrow[r] \arrow[d,equal]& [-2em]0\\
&[-4em]  g^*\Omega_Y\big(\log (B+T)\big)\otimes \Omega^{p-1}_{Z/Y}(\log \Pi) &[-1em]   &[-4em]    \Omega^{p}_{Z/Y}(\log \Pi)  &
\end{tikzcd}
\end{equation}
Pushing forward \eqref{eq:tauto2} by $\mathbf{R}g_*$,
the edge morphisms   induce
\begin{equation}\label{dia:two Higgs1}
\begin{tikzcd}
\tilde{F}^{p,q}  \arrow[d,"\rho_{p,q}"] \arrow[r,"\varphi_{p,q}"]      &     \tilde{F}^{p-1,q+1}\otimes \Omega_Y(\log B)   \arrow[d,"\rho_{p-1,q+1}\otimes \iota"]\\ 
E_0^{p,q}  \arrow[r,"\theta'_{p,q}"]   &		  E_0^{p-1,q+1}\otimes \Omega_Y\big(\log (B+T)\big) 
\end{tikzcd}
\end{equation}
where  we denote by $E_0^{p,q}:=R^qg_*\big(\Omega_{Z/Y}^p (\log \Pi)\big)$, which is locally free by a theorem of Steenbrink \cite{Ste77} (see  also \cite{Zuc84,Kol86,Kaw02,KMN02} for various generalizations), and $$\tilde{F}^{p,q}:=R^qg_*\bigg(\psi^*\Big(\Omega^p_{X/Y} \big(\log (\Delta +\dl)\big)\otimes \ls^{-1}\Big)\bigg)\Big/{\rm torsion}.$$   $\iota:\Omega_Y(\log B)\hookrightarrow \Omega_Y\big(\log(B+T)\big)$ denotes the natural inclusion. Let us mention that the similar construction as $\big(\bigoplus_{p+q=\ell}\tilde{F}^{p,q},\bigoplus_{p+q=\ell} \varphi_{p,q}\big)$ is made by Taji in his work \cite{Taj18} on a conjecture of Kebekus-Kov\'acs.

Recall that $Z$ is a desingularization of the normalization $Z_{\rm nor}$ of the cyclic cover of $X$ by taking the $m$-th roots along the normal crossing divisor $H$. Write $\psi:Z\xrightarrow{\delta} Z_{\rm nor}\xrightarrow{\phi}X$.  As is well-known, $Z_{\rm nor}$ has  rational singularities  (see e.g. \cite[\S 3]{EV92}), and one thus has $R^q\delta_*\oc_Z=0$ for any $q>0$. By  the projection formula and the degeneration of relative Leray spectral sequences, for any locally free sheaf $\es$ on $X$, one has
\begin{align}\label{eq:higher direct}
R^q\psi_*(\psi^*\es)=\es\otimes R^q\psi_*\oc_Z=\es\otimes R^q\phi_*(\delta_*\oc_Z)=0, \quad \forall q>0
\end{align}
thanks to the finiteness of $\phi$. Applying \eqref{eq:higher direct} to \eqref{grade}, for any $q>0$, we have $R^q\psi_*(\grf^i\tilde{\fs}^\bullet)=0$, and therefore, the exactness  of the tautological short exact sequence of $\tilde{\fs}^{\bullet}$  is preserved under the direct images $\psi_*$ as follows: 
\begin{equation} \label{eq:down exact}
\begin{tikzcd}
0\arrow[r] &\psi_*(\grf^1\tilde{\fs}^{\bullet}) \arrow[r]\arrow[d,equal] & \psi_*(\tilde{\fs}^0/\tilde{\fs}^2) \arrow[r] \arrow[d,equal]& \psi_*(\grf^0\tilde{\fs}^{\bullet})  \arrow[r] \arrow[d,equal]& 0\\
0\arrow[r] & \grf^1\fs^{\bullet}\otimes \psi_*\oc_Z\arrow[r] &\fs^0/\fs^2\otimes \psi_*\oc_Z\arrow[r] & \grf^0\fs^{\bullet}\otimes \psi_*\oc_Z \arrow[r] &0  
\end{tikzcd}
\end{equation}
By    the collapse of relative Leray spectral sequences, one has
\begin{align*}
	R^qg_*(\grf^i\tilde{\fs}^\bullet)&\stackrel{\eqref{grade}}{=}	R^qg_*(\psi^*\grf^i\fs^\bullet)\stackrel{\eqref{eq:higher direct}}{=}	R^qf_*\big(\psi_*(\psi^* \grf^i\fs^\bullet)\big) =R^qf_* (\grf^i\fs^\bullet\otimes \psi_*\oc_Z)  \\
	&=\Omega^i_Y(\log B)\otimes R^qf_*   \Big(\Omega^{p-i}_{X/Y} \big(\log (\Delta+\dl)\big)\otimes \ls^{-1}\otimes \psi_*\oc_Z\Big).
\end{align*}
Therefore, $\big(\bigoplus_{p+q=\ell}\tilde{F}^{p,q},\bigoplus_{p+q=\ell} \varphi_{p,q}\big)$ can also be defined alternatively by    pushing forward  \eqref{eq:down exact} via $\mathbf{R}f_*$, with $\varphi_{p,q}$ the edge morphisms.

By \cite[Corollary 3.11]{EV92}, the cyclic group $G:=\zbb/m\zbb$ acts on $\psi_*\oc_Z$, and one has the decomposition
$$
\psi_*\oc_Z=\oc_X\oplus\bigoplus_{i=1}^{m-1}(\ls^{(i)})^{-1}, \quad \mbox{where}\quad \ls^{(i)}:=\ls^i\otimes \oc_X(-\lfloor \frac{iH}{m}\rfloor).
$$
In particular, the $G$-invariant part $(\psi_*\oc_Z)^G=\oc_X$. Hence one can easily show that  the cyclic group $G$ acts on \eqref{eq:down exact}, whose $G$-invariant part is
\begin{equation*} 
\begin{tikzcd}
0\arrow[r] &\psi_*(\grf^1\tilde{\fs}^{\bullet})^G\arrow[r]\arrow[d,equal] & \psi_*(\tilde{\fs}^0/\tilde{\fs}^2)^G\arrow[r] \arrow[d,equal]& \psi_*(\grf^0\tilde{\fs}^{\bullet})^G \arrow[r] \arrow[d,equal]& 0\\
0\arrow[r] & \grf^1\fs^{\bullet}\arrow[r] &\fs^0/\fs^2\arrow[r] & \grf^0\fs^{\bullet} \arrow[r] &0  
\end{tikzcd}
\end{equation*} 
Therefore, $\big(\bigoplus_{p+q=\ell}F^{p,q},\bigoplus_{p+q=\ell} \tau_{p,q}\big)$ is a direct factor of  $\big(\bigoplus_{p+q=\ell}\tilde{F}^{p,q},\bigoplus_{p+q=\ell} \varphi_{p,q}\big)$. 
  Combing \eqref{dia:two Higgs1}, we have
\begin{equation}\label{dia:two Higgs2}
\begin{tikzcd}
	E_0^{p,q}  \arrow[r,"\theta'_{p,q}"]   &		  E_0^{p-1,q+1}\otimes \Omega_Y\big(\log (B+T)\big) \\
\tilde{F}^{p,q}  \arrow[u,"\rho_{p,q}"] \arrow[r,"\varphi_{p,q}"]      &     \tilde{F}^{p-1,q+1}\otimes \Omega_Y(\log B) \arrow[u,"\rho_{p-1,q+1}\otimes \iota"'] \\  
F^{p,q}   \arrow[r,"\tau_{p,q}"]  \arrow[u,hook]   &		  F^{p-1,q+1}\otimes \Omega_Y(\log B)   \arrow[u,hook]  
\end{tikzcd}
\end{equation}

Set $n$ to be the relative dimension of $X\to Y$.  Note that
\begin{align}\label{inclusion}
F^{n,0}=f_*(K_{X/Y}-\Delta+\Delta_{\rm red}+\dl+\ls^{-1})=f_*\big( \Delta_{\rm red} +f^*(\as-B)\big)\supset \bs,
\end{align}
where $\bs:=\as-B$ is a big and nef  line bundle.   Define $F^{n-q,q}_0:=\bs^{-1}\otimes F^{n-q,q}$, and 
$$\tau_{n-q,q}': \bs^{-1}\otimes F^{n-q,q}\xrightarrow{\mathds{1}\otimes \tau_{n-q,q}} \bs^{-1}\otimes F^{n-q-1,q+1}\otimes \Omega_Y(\log B)$$
By \eqref{dia:two Higgs2}, one has the following diagram:
\begin{align}\label{dia:two Higgs}
\xymatrixcolsep{4.3pc}\xymatrix{
	\bs^{-1}\otimes	E_0^{n-q,q}  \ar[r]^-{\mathds{1}\otimes \theta'_{n-q,q}}   &		\bs^{-1}\otimes E_0^{n-q-1,q+1}\otimes \Omega_Y\big(\log (B+T)\big) \\
	F_0^{n-q,q}  \ar[u]^{\rho'_{n-q,q}}  \ar[r]^-{  \tau'_{n-q,q}}      &     F_0^{n-q-1,q+1}\otimes \Omega_Y(\log B)   \ar[u]_{\rho'_{n-q-1,q+1}\otimes \iota}}
\end{align}
where $\iota:\Omega_Y(\log B)\to \Omega_Y\big(\log (B+T)\big) $ is the natural inclusion, and $F^{n,0}_0$ is an effective line bundle. Note that all the objects in \eqref{dia:two Higgs} are only defined over a big open set $Y'$ of $Y$.

Write $Z_0:=g^{-1}(V_0)$, which is smooth over $V_0$. The local system
$ R^n g_*\cb_{\upharpoonright Z_0}$  
extends to a locally
free sheaf $\vc$ 
on 
$Y$ (here $Y$ is projective rather than the big open set!) equipped with the logarithmic  connection
$$
\nabla:\vc\to \vc\otimes \Omega_Y\big(\log (B+T)\big),  
$$ 
whose  eigenvalues of the residues   lie  in $[0,1)\cap \mathbb{Q}$ (the so-called \emph{lower canonical extension} in \cite{Kol86}). 
By \cite{Sch73,CKS86,Kol86}, the Hodge filtration of $ R^n g_*\cb_{\upharpoonright Z_0}$   extends to a filtration $\vc:=\fc^0\supset \fc^1\supset \cdots\supset \fc^{n}$ of \emph{subbundles} so that their graded sheaves $E^{n-q,q}:=\fc^{n-q}/\fc^{n-q+1}$ 
are also  {locally free}, and  there exists
$$
\theta_{n-q,q}:E^{n-q,q}\to E^{n-q-1,q+1}\otimes \Omega_{Y}\big(\log (B+T)\big). 
$$  
As mentioned above, $E_{0}^{n-q,q}$ is locally free by   Steenbrink's theorem. By a theorem of Katz \cite{Kat71}, we know that $(\bigoplus_{q=0}^{n}E_0^{n-q,q},\bigoplus_{q=0}^{n}\theta'_{n-q,q})= (\bigoplus_{q=0}^{n}E^{n-q,q},\bigoplus_{q=0}^{n}\theta_{n-q,q})_{\upharpoonright Y'}$, hence  it can be extended to  the whole projective manifold $Y$ defined by $(\bigoplus_{q=0}^{n}E^{n-q,q},\bigoplus_{q=0}^{n}\theta_{n-q,q})$.   For every $q=0,\ldots, n$, we replace $F_0^{n-q,q}$ by its reflexive hull and thus  the morphisms  
$\tau'_{n-q,q}$,  $\rho'_{n-q,q}$ and the diagram \eqref{dia:two Higgs} extends to the whole $Y$. 

To finish the construction,  we have to introduce the   sub-Higgs sheaf of $\big(\bigoplus_{q=0}^{n}\bs^{-1}\otimes E^{n-q,q},\bigoplus_{q=0}^{n}\mathds{1}\otimes  {\theta}_{n-q,q}\big)$ in \cref{def:VZ}. 
 For each $q=0,\ldots,n$, we define a coherent torsion-free sheaf $\fs_q:=\rho'_{n-q,q}(F_0^{n-q,q})\subset \bs^{-1}\otimes E^{n-q,q}$. 
 By \eqref{dia:two Higgs}, one has
$$
\mathds{1}\otimes {\theta}_{n-q,q}: \fs_q\to \fs_{q+1}\otimes \Omega_{Y}(\log B),
$$
and  let us denote $\eta_{q}$ by the restriction of  $\mathds{1}\otimes {\theta}_{n-q,q}$ to $\fs_q$.  Then $\big(\bigoplus_{q=0}^{n} \fs_q,\bigoplus_{q=0}^{n}\eta_{q}\big)$ is a sub-Higgs sheaf of $\big(\bigoplus_{q=0}^{n}\bs^{-1}\otimes E^{n-q,q},\bigoplus_{q=0}^{n}\mathds{1}\otimes {\theta}_{n-q,q}\big)$. By \eqref{inclusion}, there exists a morphism $\oc_Y\to \fs_0$ which is an isomorphism over $V_0$. The VZ Higgs bundle is therefore constructed.
	\end{proof}
\begin{rem}
As mentioned above, the  morphism $\Xi$ defined in \eqref{eq:embedd} was first introduced by Popa-Schnell \cite{PS17}, and was later generalized to the log setting in \cite{Wei17,WW19}. This morphism inspires us to construct an intermediate Higgs bundle $\big(\bigoplus_{q=0}^{n}  \tilde{F}^{n-q,q},\bigoplus_{q=0}^{n}\varphi_{n-q,q}\big)$, which  relates   $\big(\bigoplus_{q=0}^{n}  F^{n-q,q},\bigoplus_{q=0}^{n}\tau_{n-q,q}\big)$  with $\big(\bigoplus_{q=0}^{n}  E_0^{n-q,q},\bigoplus_{q=0}^{n}\theta'_{n-q,q}\big)$ in a more   direct manner. 	 In the above proof,    we do not require the divisor $H$ for cyclic cover to be \emph{generically smooth} over the base\footnote{As pointed out by Zuo, when the base is a curve, it has already been studied   in \cite[\S 3]{VZ06}.}, which is more flexible than the original construction in \cite{VZ02,VZ03}. 	One can also see that we reduce the construction of VZ Higgs bundles over a log pair $(Y,B)$ to  the existence of the data (a)-(f).
	\end{rem}

\bibliographystyle{amsalpha}
\bibliography{biblio}

\end{document}